\documentclass[11pt]{amsart}
\usepackage{graphicx} 
\usepackage{amsmath, amssymb, amsthm}
\usepackage{adjustbox}

\usepackage{amsmath}
\usepackage{amsfonts}
\usepackage{amssymb}
\usepackage{amscd}
\usepackage{color}
\usepackage{xcolor}
\usepackage{hyperref}
\usepackage{mathrsfs}
\usepackage{eucal}
\usepackage{upgreek}
\usepackage[makeroom]{cancel}
\usepackage[normalem]{ulem}
\usepackage{array}
\usepackage{verbatim}
\usepackage{mathtools}
\usepackage{stmaryrd}
\usepackage[inline]{enumitem}
\usepackage{hyperref, cleveref}
\hypersetup{colorlinks=true}
\usepackage{xy}
\xyoption{all}
\usepackage{tikz-cd}

\newcommand{\nc}{\newcommand}

\nc{\md}{\operatorname{-}}

\renewcommand{\AA}{{\mathbb{A}}}
\nc{\CC}{{\mathbb{C}}}
\nc{\DD}{{\mathbb{D}}}
\nc{\LL}{{\mathbb{L}}}
\nc{\FF}{{\mathbb{F}}}
\nc{\RR}{{\mathbb{R}}}
\renewcommand{\P}{{\mathbb{P}}}
\nc{\OO}{{\mathbb{O}}}

\nc{\QQ}{{\mathbb{Q}}}
\nc{\ZZ}{{\mathbb{Z}}}
\nc{\Z}{{\mathbb{Z}}}

\nc{\cA}{{\mathcal{A}}}
\nc{\cB}{{\mathcal{B}}}
\nc{\cBstd}{{\cB^{\rm std}}}
\nc{\cC}{{\mathcal{C}}}
\nc{\cD}{{\mathcal{D}}}
\nc{\cE}{{\mathcal{E}}}
\nc{\cF}{{\mathcal{F}}}
\nc{\cG}{{\mathcal{G}}}
\nc{\cH}{{\mathcal{H}}}
\nc{\cI}{{\mathcal{I}}}
\nc{\cJ}{{\mathcal{J}}}
\nc{\cK}{{\mathcal{K}}}
\nc{\cKstd}{{\cK^{\rm std}}}
\nc{\cL}{{\mathcal{L}}}
\nc{\cM}{{\mathcal{M}}}
\nc{\cN}{{\mathcal{N}}}
\nc{\cO}{{\mathcal{O}}}
\nc{\cP}{{\mathcal{P}}}
\nc{\cQ}{{\mathcal{Q}}}
\nc{\cR}{{\mathcal{R}}}
\nc{\cS}{{\mathcal{S}}}
\nc{\cT}{{\mathcal{T}}}
\nc{\cU}{{\mathcal{U}}}
\nc{\cV}{{\mathcal{V}}}
\nc{\cW}{{\mathcal{W}}}
\nc{\cX}{{\mathcal{X}}}
\nc{\cY}{{\mathcal{Y}}}
\nc{\cZ}{{\mathcal{Z}}}

\nc{\rc}{{\mathrm{c}}}
\nc{\rd}{{\mathrm{d}}}
\nc{\rf}{{\mathrm{f}}}
\nc{\rh}{{\mathrm{h}}}
\nc{\rrm}{{\mathrm{m}}}
\nc{\rs}{{\mathrm{s}}}
\nc{\rch}{{\mathrm{ch}}}
\nc{\rtd}{{\mathrm{td}}}

\nc{\rA}{{\mathrm{A}}}
\nc{\rB}{{\mathrm{B}}}
\nc{\rC}{{\mathrm{C}}}
\nc{\rD}{{\mathrm{D}}}
\nc{\rE}{{\mathrm{E}}}
\nc{\rF}{{\mathrm{F}}}
\nc{\rG}{{\mathrm{G}}}
\nc{\rH}{{\mathrm{H}}}
\nc{\rI}{{\mathrm{I}}}
\nc{\rJ}{{\mathrm{J}}}
\nc{\rK}{{\mathrm{K}}}
\nc{\rL}{{\mathrm{L}}}
\nc{\rM}{{\mathrm{M}}}
\nc{\rN}{{\mathrm{N}}}
\nc{\rO}{{\mathrm{O}}}
\nc{\rP}{{\mathrm{P}}}
\nc{\rQ}{{\mathrm{Q}}}
\nc{\rR}{{\mathrm{R}}}
\nc{\rS}{{\mathrm{S}}}
\nc{\rT}{{\mathrm{T}}}
\nc{\rU}{{\mathrm{U}}}
\nc{\rV}{{\mathrm{V}}}
\nc{\rW}{{\mathrm{W}}}
\nc{\rX}{{\mathrm{X}}}
\nc{\rY}{{\mathrm{Y}}}
\nc{\rZ}{{\mathrm{Z}}}

\nc{\bA}{{\mathbf{A}}}
\nc{\bB}{{\mathbf{B}}}
\nc{\bC}{{\mathbf{C}}}
\nc{\bD}{{\mathbf{D}}}
\nc{\bE}{{\mathbf{E}}}
\nc{\bF}{{\mathbf{F}}}
\nc{\bG}{{\mathbf{G}}}
\nc{\bH}{{\mathbf{H}}}
\nc{\bI}{{\mathbf{I}}}
\nc{\bJ}{{\mathbf{J}}}
\nc{\bK}{{\mathbf{K}}}
\nc{\bL}{{\mathbf{L}}}
\nc{\bM}{{\mathbf{M}}}
\nc{\bN}{{\mathbf{N}}}
\nc{\bO}{{\mathbf{O}}}
\nc{\bP}{{\mathbf{P}}}
\nc{\bQ}{{\mathbf{Q}}}
\nc{\bR}{{\mathbf{R}}}
\nc{\bS}{{\mathbf{S}}}
\nc{\bT}{{\mathbf{T}}}
\nc{\bU}{{\mathbf{U}}}
\nc{\bV}{{\mathbf{V}}}
\nc{\bW}{{\mathbf{W}}}
\nc{\bX}{{\mathbf{X}}}
\nc{\bY}{{\mathbf{Y}}}
\nc{\bZ}{{\mathbf{Z}}}

\nc{\ba}{{\mathbf{a}}}
\nc{\bb}{{\mathbf{b}}}
\nc{\bc}{{\mathbf{c}}}
\nc{\bd}{{\mathbf{d}}}
\nc{\be}{{\mathbf{e}}}
\nc{\bg}{{\mathbf{g}}}
\nc{\bh}{{\mathbf{h}}}
\nc{\bi}{{\mathbf{i}}}
\nc{\bj}{{\mathbf{j}}}
\nc{\bk}{{\mathbf{k}}}
\nc{\bl}{{\mathbf{l}}}
\nc{\bm}{{\mathbf{m}}}
\nc{\bn}{{\mathbf{n}}}
\nc{\bo}{{\mathbf{o}}}
\nc{\bp}{{\mathbf{p}}}
\nc{\bq}{{\mathbf{q}}}
\nc{\br}{{\mathbf{r}}}
\nc{\bs}{{\mathbf{s}}}
\nc{\bt}{{\mathbf{t}}}
\nc{\bu}{{\mathbf{u}}}
\nc{\bv}{{\mathbf{v}}}
\nc{\bw}{{\mathbf{w}}}
\nc{\bx}{{\mathbf{x}}}
\nc{\by}{{\mathbf{y}}}
\nc{\bz}{{\mathbf{z}}}

\nc{\fA}{{\mathfrak{A}}}
\nc{\fB}{{\mathfrak{B}}}
\nc{\fC}{{\mathfrak{C}}}
\nc{\fD}{{\mathfrak{D}}}
\nc{\fE}{{\mathfrak{E}}}
\nc{\fF}{{\mathfrak{F}}}
\nc{\fG}{{\mathfrak{G}}}
\nc{\fH}{{\mathfrak{H}}}
\nc{\fI}{{\mathfrak{I}}}
\nc{\fJ}{{\mathfrak{J}}}
\nc{\fK}{{\mathfrak{K}}}
\nc{\fL}{{\mathfrak{L}}}
\nc{\fM}{{\mathfrak{M}}}
\nc{\fN}{{\mathfrak{N}}}
\nc{\fO}{{\mathfrak{O}}}
\nc{\fP}{{\mathfrak{P}}}
\nc{\fQ}{{\mathfrak{Q}}}
\nc{\fR}{{\mathfrak{R}}}
\nc{\fS}{{\mathfrak{S}}}
\nc{\fT}{{\mathfrak{T}}}
\nc{\fU}{{\mathfrak{U}}}
\nc{\fV}{{\mathfrak{V}}}
\nc{\fW}{{\mathfrak{W}}}
\nc{\fX}{{\mathfrak{X}}}
\nc{\fY}{{\mathfrak{Y}}}
\nc{\fZ}{{\mathfrak{Z}}}

\nc{\fa}{{\mathfrak{a}}}
\nc{\fb}{{\mathfrak{b}}}
\nc{\fc}{{\mathfrak{c}}}
\nc{\fd}{{\mathfrak{d}}}
\nc{\fe}{{\mathfrak{e}}}
\nc{\ff}{{\mathfrak{f}}}
\nc{\fg}{{\mathfrak{g}}}
\nc{\fh}{{\mathfrak{h}}}
\nc{\fj}{{\mathfrak{j}}}
\nc{\fk}{{\mathfrak{k}}}
\nc{\fl}{{\mathfrak{l}}}
\nc{\fm}{{\mathfrak{m}}}
\nc{\fn}{{\mathfrak{n}}}
\nc{\fo}{{\mathfrak{o}}}
\nc{\fp}{{\mathfrak{p}}}
\nc{\fq}{{\mathfrak{q}}}
\nc{\fr}{{\mathfrak{r}}}
\nc{\fs}{{\mathfrak{s}}}
\nc{\ft}{{\mathfrak{t}}}
\nc{\fu}{{\mathfrak{u}}}
\nc{\fv}{{\mathfrak{v}}}
\nc{\fw}{{\mathfrak{w}}}
\nc{\fx}{{\mathfrak{x}}}
\nc{\fy}{{\mathfrak{y}}}
\nc{\fz}{{\mathfrak{z}}}

\nc{\sA}{{\mathsf{A}}}
\nc{\sB}{{\mathsf{B}}}
\nc{\sC}{{\mathsf{C}}}
\nc{\sD}{{\mathsf{D}}}
\nc{\sE}{{\mathsf{E}}}
\nc{\sF}{{\mathsf{F}}}
\nc{\sG}{{\mathsf{G}}}
\nc{\sH}{{\mathsf{H}}}
\nc{\sI}{{\mathsf{I}}}
\nc{\sJ}{{\mathsf{J}}}
\nc{\sK}{{\mathsf{K}}}
\nc{\sL}{{\mathsf{L}}}
\nc{\sM}{{\mathsf{M}}}
\nc{\sN}{{\mathsf{N}}}
\nc{\sO}{{\mathsf{O}}}
\nc{\sP}{{\mathsf{P}}}
\nc{\sQ}{{\mathsf{Q}}}
\nc{\sR}{{\mathsf{R}}}
\nc{\sS}{{\mathsf{S}}}
\nc{\sT}{{\mathsf{T}}}
\nc{\sU}{{\mathsf{U}}}
\nc{\sV}{{\mathsf{V}}}
\nc{\sW}{{\mathsf{W}}}
\nc{\sX}{{\mathsf{X}}}
\nc{\sY}{{\mathsf{Y}}}
\nc{\sZ}{{\mathsf{Z}}}

\nc{\sa}{{\mathsf{a}}}
\nc{\sd}{{\mathsf{d}}}
\nc{\se}{{\mathsf{e}}}
\nc{\sg}{{\mathsf{g}}}
\nc{\sh}{{\mathsf{h}}}
\nc{\si}{{\mathsf{i}}}
\nc{\sj}{{\mathsf{j}}}
\nc{\sk}{{\mathsf{k}}}
\nc{\sm}{{\mathsf{m}}}
\nc{\sn}{{\mathsf{n}}}
\nc{\so}{{\mathsf{o}}}
\nc{\sq}{{\mathsf{q}}}
\nc{\sr}{{\mathsf{r}}}
\nc{\st}{{\mathsf{t}}}
\nc{\su}{{\mathsf{u}}}
\nc{\sv}{{\mathsf{v}}}
\nc{\sw}{{\mathsf{w}}}
\nc{\sx}{{\mathsf{x}}}
\nc{\sy}{{\mathsf{y}}}
\nc{\sz}{{\mathsf{z}}}

\nc{\oA}{{\overline{A}}}
\nc{\oB}{{\overline{B}}}
\nc{\oC}{{\overline{C}}}
\nc{\oD}{{\overline{D}}}
\nc{\oE}{{\overline{E}}}
\nc{\oF}{{\overline{F}}}
\nc{\oG}{{\overline{G}}}
\nc{\oH}{{\overline{H}}}
\nc{\oI}{{\overline{I}}}
\nc{\oJ}{{\overline{J}}}
\nc{\oK}{{\overline{K}}}
\nc{\oL}{{\overline{L}}}
\nc{\oM}{{\overline{M}}}
\nc{\oN}{{\overline{N}}}
\nc{\oO}{{\overline{O}}}
\nc{\oP}{{\overline{P}}}
\nc{\oQ}{{\overline{Q}}}
\nc{\oR}{{\overline{R}}}
\nc{\oS}{{\overline{S}}}
\nc{\oT}{{\overline{T}}}
\nc{\oU}{{\overline{U}}}
\nc{\oV}{{\overline{V}}}
\nc{\oW}{{\overline{W}}}
\nc{\oX}{{\overline{X}}}
\nc{\oY}{{\overline{Y}}}
\nc{\oZ}{{\overline{Z}}}

\nc{\oa}{{\overline{a}}}
\nc{\ob}{{\overline{b}}}
\nc{\oc}{{\overline{c}}}
\nc{\od}{{\overline{d}}}
\nc{\of}{{\overline{f}}}
\nc{\og}{{\overline{g}}}
\nc{\oh}{{\overline{h}}}
\nc{\oi}{{\overline{i}}}
\nc{\oj}{{\overline{j}}}
\nc{\ok}{{\overline{k}}}
\nc{\ol}{{\overline{l}}}
\nc{\om}{{\overline{m}}}
\nc{\on}{{\overline{n}}}
\nc{\oo}{{\overline{o}}}
\nc{\op}{{\overline{p}}}
\nc{\oq}{{\overline{q}}}
\nc{\os}{{\overline{s}}}
\nc{\ot}{{\overline{t}}}
\nc{\ou}{{\overline{u}}}
\nc{\ov}{{\overline{v}}}
\nc{\ow}{{\overline{w}}}
\nc{\ox}{{\overline{x}}}
\nc{\oy}{{\overline{y}}}
\nc{\oz}{{\overline{z}}}

\nc{\tA}{{\tilde{A}}}
\nc{\tB}{{\tilde{B}}}
\nc{\tC}{{\tilde{C}}}
\nc{\tD}{{\tilde{D}}}
\nc{\tE}{{\tilde{E}}}
\nc{\tF}{{\tilde{F}}}
\nc{\tG}{{\tilde{G}}}
\nc{\tH}{{\tilde{H}}}
\nc{\tI}{{\tilde{I}}}
\nc{\tJ}{{\tilde{J}}}
\nc{\tK}{{\tilde{K}}}
\nc{\tL}{{\tilde{L}}}
\nc{\tM}{{\tilde{M}}}
\nc{\tN}{{\tilde{N}}}
\nc{\tO}{{\tilde{O}}}
\nc{\tP}{{\tilde{P}}}
\nc{\tQ}{{\tilde{Q}}}
\nc{\tR}{{\tilde{R}}}
\nc{\tS}{{\tilde{S}}}
\nc{\tT}{{\tilde{T}}}
\nc{\tU}{{\tilde{U}}}
\nc{\tV}{{\tilde{V}}}
\nc{\tW}{{\tilde{W}}}
\nc{\tX}{{\tilde{X}}}
\nc{\tY}{{\tilde{Y}}}
\nc{\tZ}{{\tilde{Z}}}

\nc{\tfD}{{\tilde{\fD}}}
\nc{\tcA}{{\tilde{\cA}}}
\nc{\tcB}{{\tilde{\cB}}}
\nc{\tcC}{{\tilde{\cC}}}
\nc{\tcD}{{\tilde{\cD}}}
\nc{\tcE}{{\tilde{\cE}}}
\nc{\tcF}{{\tilde{\cF}}}
\nc{\tcM}{{\tilde{\cM}}}
\nc{\tcP}{{\tilde{\cP}}}
\nc{\tcT}{{\tilde{\cT}}}

\nc{\ta}{{\tilde{a}}}
\nc{\tb}{{\tilde{b}}}
\nc{\tc}{{\tilde{c}}}
\nc{\td}{{\tilde{d}}}
\nc{\te}{{\tilde{e}}}
\nc{\tf}{{\tilde{f}}}
\nc{\tg}{{\tilde{g}}}
\nc{\ti}{{\tilde{\imath}}}
\nc{\tj}{{\tilde{j}}}
\nc{\tk}{{\tilde{k}}}
\nc{\tl}{{\tilde{l}}}
\nc{\tm}{{\tilde{m}}}
\nc{\tn}{{\tilde{n}}}
\nc{\tp}{{\tilde{p}}}
\nc{\tq}{{\tilde{q}}}
\nc{\tr}{{\tilde{r}}}
\nc{\ts}{{\tilde{s}}}
\nc{\tu}{{\tilde{u}}}
\nc{\tv}{{\tilde{v}}}
\nc{\tw}{{\tilde{w}}}
\nc{\tx}{{\tilde{x}}}
\nc{\ty}{{\tilde{y}}}
\nc{\tz}{{\tilde{z}}}

\nc{\hA}{{\hat{A}}}
\nc{\hB}{{\hat{B}}}
\nc{\hC}{{\hat{C}}}
\nc{\hD}{{\hat{D}}}
\nc{\hE}{{\hat{E}}}
\nc{\hF}{{\hat{F}}}
\nc{\hG}{{\hat{G}}}
\nc{\hH}{{\hat{H}}}
\nc{\hI}{{\hat{I}}}
\nc{\hJ}{{\hat{J}}}
\nc{\hK}{{\hat{K}}}
\nc{\hL}{{\hat{L}}}
\nc{\hM}{{\hat{M}}}
\nc{\hN}{{\hat{N}}}
\nc{\hO}{{\hat{O}}}
\nc{\hP}{{\hat{P}}}
\nc{\hQ}{{\hat{Q}}}
\nc{\hR}{{\hat{R}}}
\nc{\hS}{{\hat{S}}}
\nc{\hT}{{\hat{T}}}
\nc{\hU}{{\hat{U}}}
\nc{\hV}{{\hat{V}}}
\nc{\hW}{{\hat{W}}}
\nc{\hX}{{\widehat{X}}}
\nc{\hY}{{\hat{Y}}}
\nc{\hZ}{{\hat{Z}}}

\nc{\ha}{{\hat{a}}}
\nc{\hb}{{\hat{b}}}
\nc{\hc}{{\hat{c}}}
\nc{\hd}{{\hat{d}}}
\nc{\he}{{\hat{e}}}
\nc{\hg}{{\hat{g}}}
\nc{\hh}{{\hat{h}}}
\nc{\hi}{{\hat{i}}}
\nc{\hj}{{\hat{j}}}
\nc{\hk}{{\hat{k}}}
\nc{\hl}{{\hat{l}}}
\nc{\hm}{{\hat{m}}}
\nc{\hn}{{\hat{n}}}
\nc{\ho}{{\hat{o}}}
\nc{\hp}{{\hat{p}}}
\nc{\hq}{{\hat{q}}}
\nc{\hr}{{\hat{r}}}
\nc{\hs}{{\hat{s}}}
\nc{\hu}{{\hat{u}}}
\nc{\hv}{{\hat{v}}}
\nc{\hw}{{\hat{w}}}
\nc{\hx}{{\hat{x}}}
\nc{\hy}{{\hat{y}}}
\nc{\hz}{{\hat{z}}}

\nc{\hcC}{{\widehat{\cC}}}
\nc{\hcT}{{\widehat{\cT}}}

\nc{\eps}{\upepsilon}
\nc{\lan}{\big\langle}
\nc{\ran}{\big\rangle}
\nc{\kk}{{\Bbbk}}
\nc{\io}{\upiota}
\nc{\Kr}{\mathsf{Kr}}
\nc{\cKr}{\mathcal{K}\!\mathit{r}}

\nc{\Dm}{\bD^{-}}
\nc{\Db}{\bD^{\mathrm{b}}}
\nc{\Dbc}{\bD^{\mathrm{b}}_{\mathrm{c}}}
\nc{\Dp}{\bD^{\mathrm{perf}}}
\nc{\Dperf}{\bD^{\mathrm{perf}}}
\nc{\Dqc}{\bD_{\mathrm{qc}}}
\nc{\Du}{\bD}
\nc{\Dsing}{\bD^{\mathrm{sg}}}
\nc{\Dg}{\bD^{\mathrm{sg}}}

\def\ol{\overline}

\nc{\Rn}{\rR_{\mathrm{node}}}
\nc{\Cn}{\cC_{\mathrm{node}}}
\nc{\Dfd}[1]{\bD_{\mathrm{fd}}(#1)}

\def\bw#1#2{\textstyle{\bigwedge\hskip-0.9mm^{#1}}\hskip0.2mm{#2}}

\nc{\xrightiso}[1]{ \xrightarrow[{\ \raisebox{0.5ex}[0ex][0ex]{$\sim$}\ }]{#1} }

\nc{\thick}{\mathbf{thick}}

\DeclareMathOperator{\Ext}{\mathrm{Ext}}
\DeclareMathOperator{\cExt}{\mathcal{E}\!\mathit{xt}}

\DeclareMathOperator{\cRHom}{\mathrm{R}\mathcal{H}\mathit{om}}

\DeclareMathOperator{\Coh}{\mathrm{Coh}}

\DeclareMathOperator{\Pic}{\mathrm{Pic}}

\DeclareMathOperator{\rank}{\mathrm{rk}}

\def\wt{\widetilde}

\theoremstyle{plain}

\newtheorem{theorem}{Theorem}[section]

\newtheorem{lemma}[theorem]{Lemma}
\newtheorem{proposition}[theorem]{Proposition}

\newtheorem*{theorem*}{Theorem}

\theoremstyle{definition}

\newtheorem{definition}[theorem]{Definition}
\newtheorem{assumption}[theorem]{Assumption}
\newtheorem{example}[theorem]{Example}

\theoremstyle{remark}

\newtheorem{remark}[theorem]{Remark}

\newenvironment{renumerate}{\begin{enumerate}[label={\textup{(\roman*)}}]}{\end{enumerate}}

\numberwithin{equation}{section} 

\newtheoremstyle{cited}{.5\baselineskip\@plus.2\baselineskip\@minus.2\baselineskip}{.5\baselineskip\@plus.2\baselineskip\@minus.2\baselineskip}{\itshape}{}{\bfseries}{\bfseries .}{5pt plus 1pt minus 1pt}{\thmname{#1}\thmnumber{~#2}\thmnote{ \normalfont#3}}
\theoremstyle{cited}

\newtheorem{citedcor}[theorem]{Corollary}

\title[Codimension one multiple fibers]{Obstructions for
codimension one multiple fibers of Lagrangian and Calabi--Yau fibrations}

\date{10th August 2026}

\author[Y.-J. Kim]{Yoon-Joo Kim}
\address{Department of Mathematics, Columbia University, New York, NY 10027, USA \\and HCMC, Korea Institute for Advanced Study, Seoul, South Korea}
\curraddr{}
\email{yk3029@columbia.edu}

\author[K. Oguiso]{Keiji Oguiso}
\address{Graduate School of Mathematical Sciences
3 Chome-8-1 Komaba, Meguro City,
Tokyo, 153-8914,
Japan \\and National Center for Theoretical Sciences, National Taiwan University, 
Taipei, Taiwan}
\curraddr{}
\email{oguiso@ms.u-tokyo.ac.jp }

\author[E. Shinder]{Evgeny Shinder}
\address{School of Mathematical and Physical Sciences, University of Sheffield, S3 7RH, Sheffield, UK}
\curraddr{}
\email{eugene.shinder@gmail.com}

 \subjclass[2020]{14D06,  
  14J42, 
14J27,  	
14J32,  	
  14F08}  
 
 \keywords{Hyper-Kähler manifolds,
 Lagrangian fibrations, Calabi--Yau fibrations, higher direct images, singularities, singular fibers}

\begin{document}

\begin{abstract}
We prove that compact hyper-Kähler manifolds with a Lagrangian fibration over a projective space have no multiple fibers in codimension one. This has several consequences for the structure of Lagrangian fibrations, including progress on Sawon’s conjecture on general singular fibers, Kamenova--Lu anti-hyperbolicity, and the extension of the Néron model action to a big open subset of the base.

We prove the same result for Calabi--Yau fibrations on simply-connected K-trivial varieties, including elliptic fibrations, with a single exceptional case: an odd-dimensional K-trivial variety with a single fiber of multiplicity 2 over the projective line. This exceptional case is realized by examples of Borisov--Nuer and their generalizations.
\end{abstract}

\maketitle

\tableofcontents

\section{Introduction}

It is well-known that every elliptic K3 surface $f \colon S \to \P^1$, with or without a section, cannot contain any multiple fibers. The goal of this paper is to work out two optimal generalizations of this result to simply-connected K-trivial fibrations of higher dimensions.

Our first generalization concerns Lagrangian fibrations of hyper-K\"ahler manifolds,
which are the most straightforward higher-dimensional analogs of elliptic K3 surfaces.
A hyper-K\"ahler manifold is a compact simply-connected K\"ahler manifold of dimension $2n$ such that $\rH^{0}(X, \Omega_X^2) = \CC \sigma$ for a non-degenerate two-form $\sigma$. 
A Lagrangian fibration $f\colon X \to \P^n$ is a morphism with connected fibers whose general fibers are Lagrangian with respect to $\sigma$. Note that it is conjectured that every nontrivial and non-birational fibration of $X$ is necessarily a Lagrangian fibration in this sense.
One important property of $f$ is its flatness \cite[Corollary 2]{Matsushita-equi}. We refer to the survey \cite{MauriHuybrechts} and references therein for more details on Lagrangian fibrations.

We say a morphism $f\colon X \to B$ has a codimension one multiple fiber if there is a prime divisor $D \subset B$ such that $f^{-1}(D) = mE$ for an integer $m > 1$ and an effective divisor $E \subset X$.
Our first main result is the following.

\begin{theorem}[see Theorem \ref{thm:HK}]\label{thm:lagr-intro}
Let $f\colon X \to \P^n$ be a Lagrangian fibration of a hyper-K\"ahler manifold. Then $f$ has no codimension one multiple fibers.
\end{theorem}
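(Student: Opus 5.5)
The plan is to turn the multiple fiber into a line bundle on $X$ that is a rational but not integral multiple of $f^*\cO_{\P^n}(1)$, and to rule this out using the simple connectivity of $X$ together with the structure of $R^if_*\cO_X$. Let $D\subset\P^n$ be a prime divisor of degree $d$ with $f^{-1}(D)=mE$, $m>1$. Set $h=c_1(\cO_{\P^n}(1))$ and $L=f^*\cO_{\P^n}(1)$. Then $\cO_X(mE)\cong L^{\otimes d}$, so $[E]=\tfrac{d}{m}[L]$ in $\rH^2(X,\QQ)$.

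\textbf{Step 1 (reduction to an integrality statement).} Suppose there were an integer $k$ with $\cO_X(E)\cong L^{\otimes k}$. Then $E$ would be linearly equivalent to $f^*H_k$ for every $H_k\in|\cO(k)|$, and $h^0(X,\cO_X(E))=h^0(\P^n,\cO(k))$ because $f_*\cO_X=\cO_{\P^n}$. Every section of $\cO_X(E)$ would then be pulled back from $\P^n$, so the divisor $E$ would equal $f^*D'$ for some effective $D'\sim kH$. Since $E$ is supported on $f^{-1}(D)$, we would get $D'=aD$ and $E=a\,f^*D=amE$, which is impossible for $m>1$. So it suffices to show that $\cO_X(E)$ is an integral power of $L$.

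\textbf{Step 2 (torsion class).} Since $X$ is simply connected, $\Pic(X)\hookrightarrow \rH^2(X,\ZZ)$ has torsion-free target. Hence, writing $g=\gcd(d,m)$, the bundle $\cO_X(E)$ is determined by its Chern class. The question becomes whether $c_1(L)$ is divisible by $m/g$ in $\rH^2(X,\ZZ)$, or more precisely whether $\cO_X(\tfrac{m}{g}E)\otimes L^{-d/g}$, which is a torsion line bundle and hence trivial, forces $\cO_X(E)$ into $f^*\Pic(\P^n)$. I would therefore aim to prove that $f^*\colon \Pic(\P^n)\to\Pic(X)$ has saturated image, i.e. that $c_1(L)$ is primitive modulo nothing coming from a multiple fiber. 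I would do this locally over the generic point of $D$.

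\textbf{Step 3 (local analysis, the main obstacle).} Let $U$ be a small analytic neighborhood of a general point of $D$, over which $f$ is flat and $D\cap U$ is smooth with local equation $t$. On $X_U=f^{-1}(U)$, the bundle $\cO(E)$ satisfies $\cO(E)^{\otimes m}\cong\cO(\operatorname{div} f^*t)\cong\cO$. It is a nontrivial $m$-torsion class in $\Pic(X_U)$, because $f_*\cO_{X_U}(E)=\cO_U$ and $E$ is not a pullback. Using the exponential sequence on $X_U$ and the fact that $R^1f_*\cO_X\cong\Omega^1_{\P^n}$ is locally free (Matsushita), $\rH^1(X_U,\cO)$ is a vector space, hence torsion-free. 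So this torsion class must inject into $\rH^2(X_U,\ZZ)=\rH^2(f^{-1}(p),\ZZ)$, i.e. it produces torsion in the stalk $(R^2f_*\ZZ)$ along $D$, or more invariantly a nontrivial étale $\mu_m$-cover of $X_U$ that is ramified only over $E$. The hard point is to globalize this. The fibration restricted to $\P^n\smallsetminus(\text{codim}\ \ge 2)$ acquires an orbifold structure with multiplicity $m$ along $D$. Let $\pi_1^{\rm orb}$ denote the orbifold fundamental group of $\P^n$ with weight $m$ on $D$. Then $\pi_1(X)$ surjects onto $\pi_1^{\rm orb}$, since flatness makes the fibers connected and codimension $\ge2$ loci do not change $\pi_1$. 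I would try to show that $\pi_1^{\rm orb}$ is nontrivial, or use the cyclic cover $\P^n$ branched along $D$ of order $m$ when $m\mid d$. Its pullback to $X$ is then étale, because $f^*D=mE$, and connected, which contradicts $\pi_1(X)=1$. When $m\nmid d$, I would instead combine the same ramified-cover argument with Step 2, using a second divisor $D_2$ of degree $d$ and the pencil they span, to reduce to the case $m\mid d$. If the $\pi_1$ approach fails in general, the fallback is to use Matsushita's $R^if_*\cO_X\cong\Omega^i_{\P^n}$ together with Grothendieck duality to compute $f_*\omega_X=\cO$. A multiple fiber would instead contribute $\omega_X\cong f^*(\cdots)\otimes\cO((m-1)E)$, as in Kodaira's canonical bundle formula, and this contradicts $K_X=0$ via Step 1.
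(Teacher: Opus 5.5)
\textbf{There is a genuine gap: the proposal reduces the theorem to an equivalent statement and then gives no valid argument for it.}

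\textbf{What Steps 1 and 2 do establish.} They are essentially sound. Combined, they recover what the paper gets from the cyclic covering trick in Proposition~\ref{prop:O(1/m)}. Since $f_*\cO_X=\cO_{\P^n}$, an effective divisor linearly equivalent to a pullback is itself a pullback. So $\tfrac{m}{g}E\sim\tfrac{d}{g}H$ forces $g=\gcd(d,m)=1$. Torsion-freeness of $\Pic(X)$ then yields $\cO_X(1/m)$ with $\cO_X(1/m)^{\otimes m}\cong f^*\cO_{\P^n}(1)$. But this only reduces the theorem to primitivity of $f^*\Pic(\P^n)\subset\Pic(X)$, which is equivalent to the theorem. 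Everything therefore rests on Step 3, and neither of your routes there works.

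\textbf{The orbifold $\pi_1$ route.} The orbifold fundamental group cannot see a coprime multiple fiber. If $D$ is a hyperplane, then $\P^n\setminus D\cong\CC^n$ and $\pi_1^{\rm orb}$ is trivial. For smooth $D$ it is $\ZZ/\gcd(d,m)=1$. Indeed, simple connectivity alone cannot exclude a single multiple fiber, as the Halphen pencils of Remark~\ref{ex:K-triv-examples} show. Your pencil trick cannot change $d$ relative to $m$, because the other members of the pencil are not multiple fibers.

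\textbf{The canonical bundle formula fallback.} The formula $\omega_X\cong f^*(\cdots)\otimes\cO_X((m-1)E)$ is special to elliptic fibrations; it is what drives Proposition~\ref{prop:elliptic CY}. In relative dimension $\ge 2$, the local cyclic group can act on the fiberwise volume form by the inverse of its character on the base coordinate, and then no correction term appears. This happens in two places.
\begin{itemize}
\item The Hwang--Oguiso local Lagrangian examples: there $\omega$ is trivialized by $\sigma^n$, while $\cO(E)$ is locally nontrivial, as you yourself note.
\item The Borisov--Nuer threefolds of Proposition~\ref{prop:counterexample}: there $\omega_X\cong\cO_X$ and $\cO_X(2V)\cong f^*\cO_{\P^1}(1)$, so your formula would force $M^{\otimes 2}\cong\cO_{\P^1}(-1)$ for some $M\in\Pic(\P^1)$.
\end{itemize}

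\textbf{A local misstep.} In Step 3, a torsion line bundle need not be detected in $\rH^2(X_U,\ZZ)$. The kernel of $c_1$ on $\Pic(X_U)$ is $\rH^1(X_U,\cO)/\rH^1(X_U,\ZZ)$, which can contain torsion.

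\textbf{What is missing.} You need a global, quantitative invariant that separates a compact hyper-K\"ahler $X$ from these local models. The paper uses Huybrechts--Riemann--Roch: $q(\cO_X(1/m))=0$, so $\chi(\cO_X(j/m))=n+1$ for all $j$. By Lemma~\ref{lem:Opt} this means $[\rR f_*\cO_X(1/m)]=(n+1)[\cO_p]$ in $\rK_0(\P^n)$, which has Beilinson norm $(n+1)2^n$. On the other side, the paper first reduces to projective $X$ by a degenerate twistor deformation. Koll\'ar's results then make $\rR^if_*\cO_X(1/m)$ locally free. Koll\'ar vanishing, Serre duality and Horrocks's criterion split it into $\binom{n}{i}$ line bundles of degrees in $[-n,0]$, each a single Beilinson basis vector. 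The triangle inequality then bounds the norm by $2^n$, a contradiction. Nothing in your proposal plays the role of this Euler-characteristic-versus-vanishing comparison.
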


The theorem is in contrast with its local version in \cite{HwangOguiso}; codimension one multiple fibers do arise in Lagrangian fibrations of non-compact holomorphic symplectic manifolds over polydiscs. Furthermore, even for hyper-K\"ahler manifolds, multiple fibers can arise in higher codimensions \cite{Hellmann}.

Theorem~\ref{thm:lagr-intro} is tightly connected to some foundational results in Lagrangian fibrations. It is equivalent to the existence of local sections of $f$ in codimension one by 
\cite[Remarque 1.3]{CampanaMultipleFibers}
or
\cite[Theorem 1.2]{CampanaKamenovaVerbitsky}.
This in turn is related to the study of Tate--Shafarevich twists of Lagrangian fibrations \cite{AbashevaRogov, Kim-neronmodels, Sacca-compactifications, DuttaMatteiShinder}.
In a similar vein, Theorem~\ref{thm:lagr-intro} implies that the non-critical locus of $f$, over a big open subset of $\P^n$, is a torsor under a smooth commutative group scheme \cite[Theorem~5.2]{Kim-neronmodels}.
Lastly, the theorem rules out difficult analysis of the Tate--Shafarevich group in the presence of codimension one multiple fibers in the proof of the boundedness result of projective Lagrangian fibrations, in the particular case when the base is $\P^n$ \cite[\S5]{EFGMS}.

Some results in the literature stated under the validity of Theorem~\ref{thm:lagr-intro} are now unconditional. These include the so-called tcf (torsion and cotorsion free) property of the constructible sheaf $\rR^1 f_*\Z_X$ \cite[Theorem~2.13]{DuttaMatteiShinder}, application to isotrivial Lagrangian fibrations \cite[Theorem~1.7]{KimLazaMartin}, and holomorphic dominability of $X$ by $\CC^{2n}$ \cite[Theorem~4.12]{KamenovaLu}.

It is a conjecture of Sawon that for sufficiently general Lagrangian fibrations their general singular fibers are semistable \cite[Conjecture 1]{Sawon-general}. To this direction, we can obtain the following sharper version of the known results in the literature \cite{Lehn-deformations}, \cite{Sawon-general}, \cite[Lemma~3.13]{Voisin25}.

\begin{citedcor}\label{cor:intro-reduced} 
If $\rho(X) = 1$ and $X$ is non-projective, or $\rho(X) = 2$ and $X$ is projective, then general singular fibers of $f$ are reduced and have type $\mathrm{I}$, $\mathrm{II}$, $\mathrm{III}$, or $\mathrm{IV}$ in the sense of \cite{HO-foliation}.
\end{citedcor}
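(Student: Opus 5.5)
Corollary~\ref{cor:intro-reduced} follows by combining Theorem~\ref{thm:lagr-intro} with the known local classification of general singular fibers under the Picard rank hypothesis. The hypothesis on $\rho(X)$ is exactly the one under which \cite{Lehn-deformations}, \cite{Sawon-general} and \cite[Lemma~3.13]{Voisin25} show the following. Over a general point $t$ of the discriminant divisor $\Delta\subset\P^n$, the fiber $X_t$ has the same structure as a singular fiber of an elliptic surface, up to an abelian factor. More precisely, take a small disc $\Delta^1$ transverse to $\Delta$ at $t$. Then $f$ restricted over $\Delta^1\times(\text{neighbourhood in }\Delta)$ is, up to étale base change or isogeny, a product of a smooth family of $(n-1)$-dimensional abelian varieties with a minimal elliptic surface over $\Delta^1$. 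The hypothesis on $\rho(X)$ is used to rule out the degeneration of several independent elliptic or abelian factors simultaneously. The general singular fiber then has Kodaira type ${}_m\mathrm{I}_k$, $\mathrm{I}_k^*$, $\mathrm{II}$, $\mathrm{III}$, $\mathrm{IV}$, their duals $\mathrm{II}^*,\dots$, or a multiple of one of these. In the language of \cite{HO-foliation}, these cases are sorted into types $\mathrm{I}$--$\mathrm{IV}$ together with non-reduced and multiple variants.

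The plan is to eliminate the remaining cases in two steps. First, a multiple fiber of type ${}_m\mathrm{I}_k$ with $m>1$, or a multiple smooth fiber, over a general point of a component $D$ of $\Delta$ gives $f^{-1}(D)=mE$. This is a codimension one multiple fiber, which Theorem~\ref{thm:lagr-intro} forbids. Second, we must exclude the non-reduced but non-multiple configurations, namely fibers with a component of multiplicity $>1$ but gcd of multiplicities equal to $1$ (types $\mathrm{I}_k^*$, $\mathrm{II}^*$, etc.). For these I would invoke the Hwang--Oguiso characteristic foliation analysis \cite{HO-foliation}. Together with the Picard rank restriction (as in \cite{Sawon-general} and \cite{Voisin25}), it shows that the only non-reduced possibilities at a general point of $\Delta$ force the whole fiber to be a multiple. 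In other words, the reduced fiber structure $f^{-1}(D)_{\mathrm{red}}$ would carry a uniform multiplicity. So the previously known statement reads: general singular fibers are of type $\mathrm{I}$--$\mathrm{IV}$ or are multiples of such. Theorem~\ref{thm:lagr-intro} removes the multiple case and yields reducedness.

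The main obstacle is checking precisely that the cited results leave only the \emph{multiple} case as the potential failure of reducedness, rather than fibers with mixed multiplicities. I would first verify this against the statements in \cite{Sawon-general} and \cite[Lemma~3.13]{Voisin25}. If gaps remain, I would fill them by a local monodromy computation: mixed multiplicity fibers have finite monodromy of order $3,4,6$ or type $\mathrm{I}^*$. Under the rank hypotheses, such monodromy on $\rR^1f_*\ZZ$ is excluded because the invariant classes would produce extra divisor classes beyond $\rho(X)$.
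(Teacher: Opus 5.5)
Your skeleton matches the paper's. Under the Picard-rank hypothesis a general singular fiber is $m$ times a reduced one, Theorem~\ref{thm:lagr-intro} forces $m=1$, and the Hwang--Oguiso classification then leaves types $\mathrm{I}$--$\mathrm{IV}$. The gap is in the one step that needs an argument: excluding general singular fibers whose components have different multiplicities ($\mathrm{I}_k^*$, $\mathrm{II}^*$, \dots). The characteristic-foliation analysis does not do this; the paper uses it only at the end, to read off the type of a reduced fiber. Nor does $\rho(X)$ enter by preventing several factors from degenerating at once. Your fallback cannot work either. The reduced types $\mathrm{II},\mathrm{III},\mathrm{IV}$ have local monodromy of the same orders $6,4,3$ as $\mathrm{II}^*,\mathrm{III}^*,\mathrm{IV}^*$, with no nonzero invariants on the rank-two part in either case. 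So any exclusion via monodromy invariants of $\rR^1 f_*\ZZ$ would also kill $\mathrm{II},\mathrm{III},\mathrm{IV}$, which the corollary allows.

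The missing idea is that the hypothesis on $\rho(X)$ forces $f^{-1}(D)$ to be irreducible for every prime divisor $D\subset\P^n$. This is what the paper extracts from Sawon's and Voisin's proofs. Write $f^*D=\sum m_iE_i$; every $E_i$ dominates $D$ because $f$ is flat. All components of a general fiber over $D$ that lie in a given $E_i$ have multiplicity $m_i$, so mixed multiplicities require at least two $E_i$. Now use these steps:
\begin{itemize}
\item Vertical divisors are $q$-orthogonal to $L=f^*\cO(1)$ by the Fujiki relation.
\item $L^{\perp}\cap\mathrm{NS}(X)_\QQ=\QQ L$. This is trivial if $\rho=1$. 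If $\rho=2$ and $X$ is projective, $q$ is hyperbolic on $\mathrm{NS}(X)$ and $L$ is isotropic.
\item Since $\Pic(X)=\mathrm{NS}(X)$ is torsion-free, $\cO_X(bE_1)\simeq L^{\otimes a}$ for some $a,b>0$.
\item Since $f_*\cO_X=\cO_{\P^n}$, every section of $L^{\otimes a}$ is a pullback. Hence $bE_1=f^*D'$ for some effective $D'$, and $D=f(E_1)\subseteq D'$.
\end{itemize}
Therefore $f^{-1}(D)\subseteq f^{-1}(D')=E_1$ set-theoretically, so $f^{-1}(D)=mE$ with $E$ prime. Theorem~\ref{thm:lagr-intro} then gives $m=1$, and the classification finishes the proof.
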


The new input here is that Theorem \ref{thm:lagr-intro} implies that general singular fibers are necessarily reduced.
Indeed, if a general singular fiber had components with different multiplicities, then the corresponding divisor in $X$ would be reducible, contradicting the assumption on $\rho(X)$; see \cite[Proof of Theorem 2]{Sawon-general} for $\rho = 1$ and \cite[Proof of Lemma 3.13]{Voisin25} for $\rho = 2$. It follows that, using the classification of Hwang and the second named author \cite{HO-foliation, HwangOguiso} (see also \cite{Kim-HO}), general singular fibers of $f$ can only have type I, II, III or IV.

Theorem \ref{thm:lagr-intro} is closely related to the primitivity of the embedding $f^*(\Pic(\P^n)) \subset \Pic(X)$. This is a question initiated by \cite{KamenovaVerbitsky-primitive} and it is interesting in its own right, see e.g. the discussion following Conjecture 1.4 in \cite{DHMV}. The main result of \cite{KamenovaVerbitsky-primitive} together with Theorem \ref{thm:lagr-intro} implies that $f^*(\Pic(\P^n)) \subset \Pic(X)$ is primitive;
see also Remark \ref{rem:primitivity} about how it can be proved directly using our method.
Conversely, the first step of our proof is to show, 
in Proposition \ref{prop:O(1/m)}, that primitivity implies no codimension one multiple fibers.
Theorem \ref{thm:lagr-intro} and this primitivity result were announced independently by Kamenova--Verbitsky \cite{KV-multiple};
their proof is quite different from ours.

\medskip

Our second generalization concerns higher-dimensional Calabi--Yau fibrations; see Conventions and Notation below for precise definitions. The following is the main result in this direction.

\begin{theorem}
[see Theorem \ref{thm:CY}]
\label{thm:CY-intro}
Let $X$ be a simply-connected, smooth projective, and K-trivial variety 
with a (not necessarily flat)
Calabi--Yau fibration 
$f\colon X \to \P^n$ 
of relative dimension $r$.
If $n \ge 2$ or $r$ is odd, then
$f$ has no codimension one multiple fibers.
If $n = 1$ and $r$ is even,
then $f$ has no multiple fibers of multiplicity greater than $2$, and there is at most one fiber with multiplicity $2$.    
\end{theorem}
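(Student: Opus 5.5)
Suppose $D\subset\P^n$ is a prime divisor of degree $d$ with $f^*D = mE$, where $m>1$ and $E$ is effective. The plan is to manufacture torsion or root line bundles out of $E$ and rule them out using simple connectivity and K-triviality. Since $X$ is simply connected and $\rH^1(X,\cO_X)=0$, we know $\Pic(X)$ is torsion-free. From $\cO_X(mE)\cong f^*\cO(d)$ we get, when $m\nmid d$, a line bundle $\cO_X(E)$ which is an ``$m$-th root'' of $f^*\cO(d)$ not coming from $\P^n$. In the spirit of Proposition~\ref{prop:O(1/m)}, I would first reduce to showing that $f^*\Pic(\P^n)\subset\Pic(X)$ is primitive. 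If $L^{m}\cong f^*\cO(1)$, then $L$ restricts trivially (up to torsion, hence trivially) to the general fiber, so it is of the form $\cO_X(\sum a_iE_i)$ supported on fibers. Conversely, a multiple fiber over $D$ gives $\cO_X(E)$, with $E\cdot(\text{stuff})$ equal to $d/m$ times the pullback class. The content of the statement is therefore to show that no line bundle $L$ with $L^{m}\cong f^*\cO(1)$ for some $m>1$ exists, except in the stated exceptional case. When $m\mid d$ one combines several multiple fibers, or argues via the canonical bundle formula, to produce such a root.

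\textbf{Key step: Euler characteristics.} For $L$ with $L^{m}=f^*\cO(1)$, consider $\chi(X,L^{k})$ for $k\in\ZZ$. By Riemann--Roch this is a polynomial $P(k)$ in $k$. Its values at multiples of $m$ are $\chi(f^*\cO(j))=\sum_i(-1)^i\chi(\P^n,\rR^if_*\cO_X(j))$. Because the fibration is Calabi--Yau with $X$ K-trivial, Kollár's theorems give that $\rR^if_*\cO_X\cong\Omega^i_{\P^n}$ in the hyper-K\"ahler case (Matsushita), and are torsion-free/split with controlled degrees in general (e.g.\ $\rR^rf_*\cO_X\cong\omega_{\P^n}$ by relative duality, and $\rR^if_*\cO_X=0$ for $0<i<r$ when fibers have $h^{i,0}=0$). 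Hence $Q(j):=P(mj)$ is explicitly computable; for example, $\chi(f^*\cO(j))=\chi(\cO_{\P^n}(j))+(-1)^r\chi(\cO_{\P^n}(j-n-1))$ when only $i=0,r$ contribute. This is a polynomial of degree $n$ whose roots lie at $j=-1,\dots,-n$ and are shifted symmetrically. The polynomial $P$ itself has degree $n$, since $L$ is numerically $\tfrac1m f^*H$, and by Serre duality with $K_X=0$ it satisfies $P(-k)=(-1)^{\dim X}P(k)$. Moreover, Kawamata--Viehweg vanishing and the projection formula applied to $L^k$ (nef, with $L^k\otimes\cO(-K_X)$ nef and abundant) give $\chi(L^k)=h^0(L^k)$ for $k>0$. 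For $0<k<m$ we get $h^0(L^k)\le h^0(f^*\cO)$-type bounds; in fact sections of $L^k$ are pulled back from $\P^n$ twisted by $\cO(k/m)$, so $h^0(L^k)=0$ for $0<k<m$ when $n\ge1$ and the relevant pushforward is the zero degree part. Thus $P$ vanishes at $k=1,\dots,m-1$ and, by symmetry, at $-1,\dots,-(m-1)$, while at the multiples of $m$ it agrees with $Q$. A degree $n$ polynomial with these $2(m-1)$ zeros, interpolating the known values $P(\pm m)=Q(\pm1)$ and $P(0)=\chi(\cO_X)$, yields a contradiction by counting zeros against degree and comparing leading coefficients, except in the case $n=1$, $m=2$, $r$ even, where everything is consistent. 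In that case $P(k)$ is linear with $\chi(\cO_X)=1+(-1)^r=2$ and $P(\pm1)=0$, which is impossible for a linear polynomial unless one sums over fibers cleverly. Indeed the actual exceptional statement concerns $\cO_X(\sum E_i)$ with several multiple fibers, so for $n=1$ I would instead take $L=\cO_X(E_1-E_2)$, or $\cO_X(E)\otimes f^*\cO(-1)$, to get torsion classes when two fibers are double or a fiber has multiplicity greater than $2$. Torsion-freeness of $\Pic$ then contradicts these unless the bundle is trivial, and triviality contradicts $h^0=0$ arguments.

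\textbf{Main obstacle.} The hardest part will be controlling $\rR^if_*\cO_X$ for non-flat Calabi--Yau fibrations in general relative dimension $r$, needed to compute $Q$ exactly. I would try Kollár's splitting $\mathrm{R}f_*\cO_X\simeq\bigoplus\rR^if_*\cO_X[-i]$ together with torsion-freeness and relative duality $\rR^if_*\cO_X\cong(\rR^{r-i}f_*\cO_X)^\vee\otimes\omega_{\P^n}$. The intermediate terms then pair up and their contributions cancel in the parity-symmetric comparison, so that only the symmetry $P(-k)=(-1)^{n+r}P(k)$ and the vanishing at $0<|k|<m$ are really needed.
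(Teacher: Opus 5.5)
\textbf{Your key step is false, so there is a genuine gap.} You claim that $h^0(X,L^k)=0$, and hence $P(k)=\chi(X,L^k)=0$, for $0<k<m$ and any $L$ with $L^{m}\cong f^*\cO(1)$. The Borisov--Nuer situation of Proposition~\ref{prop:counterexample} refutes this: there $n=1$, $r=2$, $m=2$, $L=\cO_X(V)$, and $h^0(L)=\chi(L)=1$. In general $f_*L^k$ is only a line bundle of degree in $[-n,0]$, and it may be trivial; for instance $L=\cO_X(E)$ has a section whenever $\deg D=1$.

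Your claim $\chi(L^k)=h^0(L^k)$ for $k>0$ also fails. $L^k$ is not big, so Kawamata--Viehweg does not apply. In the same example $\chi(L^4)=4$, while $h^0(L^4)=h^0(\P^1,\cO(2))=3$.

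This is why your treatment of the case $n=1$, $m=2$, $r$ even ``proves'' the impossibility of a configuration that actually occurs. In that case your own symmetry $P(-k)=(-1)^{n+r}P(k)$, with $n+r$ odd, forces $P(0)=\chi(\cO_X)=0$, not $2$.

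Two ingredients are missing.

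First, you need the correct control of $\chi(L^k)$ for $0<k<m$, which is Theorem~\ref{thm:mainbundles}:
\begin{itemize}
\item $\rR^if_*L^k$ is locally free of rank $h^i(F,\cO_F)$, by Koll\'ar's torsion-freeness and splitting.
\item It has no higher cohomology on $\P^n$ for $k>0$, by Koll\'ar's vanishing.
\item Via Serre duality, the relation $\rR^{r-i}f_*L^{-k}\cong(\rR^if_*L^k)^\vee\otimes\omega_{\P^n}$ and Horrocks' criterion, it splits into line bundles of degrees in $[-n,0]$.
\end{itemize}
Since only $i=0,r$ contribute, this gives $\chi(L^k)\le 1$; when $r$ is even, a duality argument rules out both summands being $\cO_{\P^n}$.

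Second, since $P(mj)=Q(j)$ for all $j$, one has $P(x)=Q(x/m)$ identically, so $Q(k/m)\in\ZZ$ for every $k$. You instead treat $P$ as only partly pinned down by $Q$ through interpolation.

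With these two facts the argument closes:
\begin{itemize}
\item For $n\ge 2$, $Q$ is nonconstant with nonnegative coefficients and $Q(0)\ge 0$. So $Q(1/m)<\dots<Q((m-1)/m)$ are integers in $(0,1]$. This forces $m=2$ and $Q(1/2)=1$, contradicting Lemma~\ref{lem:strictCY-chi}.
\item For $n=1$ and $r$ odd, $Q\equiv 2$, which contradicts $\chi(L^k)\le 1$ directly.
\item For $n=1$ and $r$ even, $Q(x)=2x$, so $Q(1/m)=2/m\in\ZZ$ forces $m=2$.
\end{itemize}
Your substitute for the last case, $\cO_X(E)\otimes f^*\cO(-1)$, is not torsion: its $m$-th power is $f^*\cO(1-m)$.

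Finally, the case $\gcd(m,\deg D)>1$ is only gestured at. The paper settles it with the degree-$p$ cyclic cover of $\P^n$ branched along $D$. Its normalized pullback is a connected \'etale cover of $X$ of degree $p$, contradicting simple connectivity. Applying the same coprimality statement to $D=b+b'$ excludes two double fibres.
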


There exist examples of simply-connected Calabi--Yau threefolds that are K3-fibered over $\P^1$ and that admit a single multiple fiber of multiplicity $2$
\cite{Borisov-Nuer16, Suzuki22}. We give a criterion for the existence of a fibration of this type for an odd-dimensional K-trivial variety $X$:
it suffices that $X$ contains an Enriques--Calabi--Yau divisor $V \subset X$,
see Proposition \ref{prop:counterexample}. This is a generalization of the corresponding result for Calabi--Yau threefolds containing an Enriques surface by Borisov--Nuer \cite[\S8]{Borisov-Nuer16}; their proof applies in this more general situation. Without the simply-connectedness assumption, there can be several multiple fibers, see Remark \ref{ex:K-triv-examples}. When $f$ is an elliptic fibration, Theorem~\ref{thm:CY-intro} holds under weaker conditions: this will be presented in Proposition~\ref{prop:elliptic CY}. 
\medskip

The proofs of Theorems~\ref{thm:lagr-intro} and \ref{thm:CY-intro} use a combination of classical and new techniques:
the cyclic covering trick,
Koll\'ar's vanishings  for higher direct image sheaves,
Horrocks's splitting criterion for vector bundles on $\P^n$, Hilbert polynomials
and perhaps unexpectedly, a lattice norm with respect to the Beilinson basis $[\cO_{\P^n}(-n)], \ldots, [\cO_{\P^n}]$ 
of the Grothendieck ring $\rK_0(\P^n)$ (see \S \ref{sec:Lagrangian}).

More specifically, we can summarize our strategy as follows. Standard deformation techniques for Lagrangian fibrations allow us to assume $f$ is projective in Theorem~\ref{thm:lagr-intro}.
Therefore, to prove the theorems we assume $f$ is a projective fibration admitting a codimension one fiber of multiplicity $m > 1$, and aim to get a contradiction. The first step is to use the cyclic covering trick and simply-connectedness of $X$, and deduce that the class $f^*(\cO_{\P^n}(1)) \in \Pic(X)$ is uniquely divisible by $m$ (see Proposition~\ref{prop:O(1/m)}). Denote the corresponding line bundle by $\cO_X(\frac1{m})$ and consider its higher direct image sheaves $\rR^i f_* \cO_X(\frac1{m})$. By results of Koll\'ar \cite{KollarI, KollarII, Kollar-automorphic}, these higher direct images are locally free and have vanishing higher-degree cohomology groups. The main properties of these sheaves are formulated in Theorem~\ref{thm:mainbundles}.
In particular, as a consequence of Horrocks's criterion, we are able to show that these locally free sheaves are direct sums of line bundles on $\P^n$.
The contradiction in the proof of Theorem~\ref{thm:lagr-intro} is deduced by computing an invariant, which we call the Beilinson norm, of the class $[\rR f_* \cO_X \left(\frac1{m}\right)]$ in $\rK_0(\P^n)$ (see \S \ref{sec:Lagrangian}).
The only property of Lagrangian fibration we use is the equality $\chi(X, \cO_X (\frac{1}{m})^{\otimes j}) = n+1$ for every $j \in \Z$, which is a consequence of the existence of the Beauville--Bogomolov--Fujiki form and Huybrechts--Riemann--Roch formula.
For Theorem~\ref{thm:CY-intro}, we use the integrality constraints on the  Hilbert polynomial of the class $[\rR f_* \cO_X \left(\frac1{m}\right)]$. This will be explicitly computed in \S\ref{sec:CY}. Hilbert polynomial 
obstructions should work
for 
Calabi--Yau fibrations over more general smooth bases.

\medskip

In both Theorem~\ref{thm:lagr-intro} and Theorem~\ref{thm:CY-intro}, our method of proof requires that $X$ is smooth or at least factorial. It is not entirely clear what kind of generalizations to the singular case one can expect. 
Note that a singular Kummer K3 surface already provides a counterexample to 
both Theorems~\ref{thm:lagr-intro} 
and
\ref{thm:CY-intro}
if we allow $X$ to have Gorenstein canonical singularities, see Example \ref{ex:KummerK3}.
We explore some situations where the results still hold for singular varieties $X$ in  \S\ref{sec:singular}.

\medskip

\noindent{\bf Conventions and Notation.}
We work over the field of complex numbers. A variety is an integral separated scheme of finite type over $\CC$.
A variety $X$ is \emph{K-trivial} if it is Gorenstein and $\omega_X \simeq \cO_X$. A \emph{Calabi--Yau manifold} is a smooth projective K-trivial variety $X$ with $\rH^i (X, \cO_X) = 0$ for $0 < i < \dim X$; note that this includes elliptic curves and K3 surfaces. A proper surjective morphism $f : X \to B$ between smooth varieties has a \emph{codimension $1$ multiple fiber} if there exists a prime divisor $D \subset B$ such that $f^{-1}(D) = mE$ for an effective divisor $E$ and $m > 1$. When $f$ is flat, this is equivalent to saying 
that for a general point $b \in D$, the cycle class of $f^{-1}(b)$ is divisible by $m$.
A proper surjective morphism $f : X \to B$ between normal varieties is a \emph{fibration} if it has connected fibers, or equivalently $\cO_B \to f_* \cO_X$ is an isomorphism. 
By the relative dimension of a fibration we mean the dimension of its general fiber $F$.

\medskip

We will mostly work under the following setup throughout the paper.

\begin{assumption} \label{as:CY}
    $X$ is a  simply-connected and K-trivial smooth projective complex variety (hence by the Beauville--Bogomolov decomposition theorem it is a product of projective hyper-K\"ahler manifolds and strict Calabi--Yau manifolds). We further assume that $f \colon X \to B = \P^n$ is a fibration and write $F$ for its general fiber with dimension $r$.
\end{assumption}

In fact, all our results hold for over any algebraically closed
field of characteristic zero and simply-connectedness can be replaced by triviality of the \'etale fundamental group of $X$.

\medskip

\noindent{\bf Acknowledgments.}
We thank Philip Engel and J\'anos Koll\'ar 
for sharing detailed examples and for suggesting alternative approaches to some of our arguments, and Justin Sawon for his comments on the paper.
We also thank
Yajnaseni Dutta,
Daniel Huybrechts,
Stefan Kebekus, 
Dominique Mattei,
Mirko Mauri,
for discussions and interest in our work. 
K.O. is partially supported by JSPS grants 25H00587 and 25K21992.
E.S. was supported by the UKRI Horizon Europe guarantee award `Motivic invariants and birational geometry of simple normal crossing degenerations' EP/Z000955/1.

\section{Computing the higher pushforward sheaves} \label{sec:higher pushforward sheaves}

The following simple result is the starting point for our investigation of codimension $1$ multiple fibers.

\begin{proposition}\label{prop:O(1/m)}
	Let $X$ be a simply-connected normal projective variety and $f\colon X \to B = \P^n$ be a fibration.
    Assume that $D \subset B$ is a reduced divisor and $f^{-1}(D) = mE$ for an effective Cartier divisor $E$
    with $m > 1$. Then
    \begin{enumerate}
        \item $\deg(D)$ and $m$ are coprime.
        \item There exists a unique line bundle $\cO_X(1/m)$ on $X$ satisfying $\cO_X(1/m)^{\otimes m} \simeq f^*(\cO_B(1))$. For every point $b \in B \setminus D$ the restriction of $\cO_X(1/m)$ to the (scheme-theoretic) fiber $f^{-1}(b)$ is trivial.
    \end{enumerate}
\end{proposition}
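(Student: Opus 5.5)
The plan is to build a cyclic cover of degree $m$, branched only along $D$, and to use simple-connectedness of $X$ to force it to split. Let $d = \deg D$ and let $s \in \rH^0(\P^n, \cO(d))$ be the section cutting out $D$. Then $f^*s$ is a section of $f^*\cO(d)$ with divisor $f^*D = mE$. Set $L = \cO_X(E)$. We have $L^{\otimes m} \simeq f^*\cO(d)$, and this isomorphism is the one sending the $m$-th power of the canonical section $s_E$ to $f^*s$. (This uses that $E$ is Cartier and $X$ is normal, so divisors of sections determine the line bundle.)

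For part (1), let $g = \gcd(d,m)$ and suppose $g > 1$. Put $M = L \otimes f^*\cO(-d/g)$. Then $M^{\otimes m}\simeq f^*\cO(d - md/g)$, which is not obviously trivial, so I would instead use the following construction directly. Write $m = g m'$ and $d = g d'$. The line bundle $N = L^{\otimes m'} \otimes f^*\cO(-d')$ satisfies $N^{\otimes g} \simeq \cO_X$. Moreover the trivialization is given by the section $s_E^{m}\cdot (f^*s)^{-1}$, which is nowhere vanishing. This defines an étale cyclic cover $Y = \mathrm{Spec}\bigoplus_{i=0}^{g-1} N^{-i} \to X$ of degree $g$.

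Since $X$ is simply connected, this cover must split. Hence $N$ is a $g$-torsion line bundle on a simply connected normal projective variety, and so $N \simeq \cO_X$, because $\Pic$-torsion corresponds to étale covers. Concretely, $N$ trivial means there is a nowhere vanishing section, which gives $s_E^{m'}$ as a section of $f^*\cO(d')$. Pushing forward via $f_*\cO_X=\cO_B$, we get $\rH^0(X, f^*\cO(d')) = \rH^0(\P^n,\cO(d'))$. So $s_E^{m'} = f^*t$ for some $t$ of degree $d'$ with $t^g = s$ up to a scalar. This contradicts $D$ being reduced, since $g>1$ would make $D = g\cdot \mathrm{div}(t)$. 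This proves (1).

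For part (2), coprimality gives integers $a,b$ with $am + bd = 1$. Define $\cO_X(1/m) := L^{\otimes b}\otimes f^*\cO(a)$. Then $\cO_X(1/m)^{\otimes m} \simeq f^*\cO(bd + am) = f^*\cO(1)$. For uniqueness, any two such bundles differ by an $m$-torsion line bundle, which is trivial on simply connected $X$ by the same étale-cover argument. The restriction to a fiber $f^{-1}(b)$ with $b\notin D$ is also clear: $E$ is disjoint from $f^{-1}(b)$, so $L$ restricts trivially via $s_E$, and $f^*\cO(a)$ restricts trivially to any fiber.

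The only delicate point is justifying that torsion line bundles on a simply connected normal projective variety are trivial. This follows because a nontrivial $k$-torsion line bundle of exact order $k$ yields a connected étale cyclic cover; normality ensures the cover is a variety and that it is connected when the order is exact.
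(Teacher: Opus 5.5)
Your proof is correct. Part (2) is the same as the paper's: Bézout, torsion-freeness of $\Pic(X)$ for uniqueness, and triviality of $\cO_X(E)$ and $f^*\cO_B(a)$ away from $f^{-1}(D)$. Part (1), however, takes a genuinely different route.

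The paper works on the base. For a prime $p \mid \gcd(d,m)$ it takes the degree $p$ cyclic cover $B' \to B$ branched along $D$; this cover is irreducible because $D$ is reduced. It then checks, via the local equation $t^p = f^m$, that the normalization of $X \times_B B'$ is a connected étale degree $p$ cover of $X$, contradicting $\pi_1(X)=1$.

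You stay on $X$ instead. The bundle $N = \cO_X(m'E)\otimes f^*\cO_B(-d')$ is $g$-torsion, hence trivial by the same torsion-freeness lemma used in (2). Then $f_*\cO_X=\cO_B$ descends $s_E^{m'}$ to a form $t$ of degree $d'$ with $t^g = c\,s$, which contradicts reducedness of $D$. The scalar $c$ is harmless, since two isomorphisms $L^{\otimes m}\simeq f^*\cO_B(d)$ differ by an element of $\rH^0(X,\cO_X^\times)=\CC^\times$.

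The two arguments see the same cover from different sides: the paper's $X'$ is exactly the cyclic étale cover attached to the $p$-torsion bundle $\cO_X((m/p)E)\otimes f^*\cO_B(-d/p)$. The difference is where reducedness of $D$ is spent. The paper uses it up front to make $X'$ connected. You use it after the fact, to rule out the splitting of the cover.

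Your version avoids normalization and the local branch computation, and it runs both parts off a single lemma. The paper's version is the classical cyclic covering trick of Hwang--Oguiso and makes geometrically explicit how a multiple fiber produces an étale cover.

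Two small clean-ups:
\begin{itemize}
\item Delete the false start with $M$.
\item In your last paragraph, connectedness of the cover attached to a torsion bundle of exact order $k>1$ comes from $\rH^0(X,N^{-i})=0$ for $0<i<k$. That vanishing only needs $X$ integral, not normal.
\end{itemize}
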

\begin{proof}
    The first statement is an application of the cyclic covering trick, see e.g. \cite[Proposition 3.1]{HwangOguiso}.
	Indeed, assume that $d \coloneq \deg(D)$ and $m$ have a common prime factor $p$. Consider the degree $p$ cyclic covering  $B' \to B$ branched along $D$ \cite[9.4, 9.5]{Kollar-automorphic}; since we assume $D$ to be reduced, $B'$ is irreducible.
    Let  $X'$ be the normalization of $X \times_B B'$; it is connected because $B'$ and the (general, hence all) fibers of $X' \to B'$ are connected.
    To understand the morphism $X' \to X$, let $f \in \cO_X$ be a local defining equation of $E$. Then $X \times_B B'$ is locally defined by $(t^p = f^m) \subset \AA^1_t \times X$, and the normalization process splits this into $p$ disjoint branches $(t = \zeta^i f^{m/p})$ for $p$-th roots of unity $1, \zeta, \cdots, \zeta^{p-1}$. This shows $X' \to X$ is \'etale. Hence it is a degree $p$ covering, which violates the simply-connectedness of $X$.

    For the existence part in the second statement write $1 = ad + bm$ with $a, b \in \Z$. Let $H \subset X$ be the pullback of a hyperplane in $B$. Since $mE$ is linearly equivalent to $dH$, the line bundle $L = \cO_X(aE + bH)$ satisfies $L^{\otimes m} \simeq \cO_X(adH + bmH) \simeq \cO_X(H)$. Such $L$ is unique because $\Pic(X)$ is torsion-free by  simply-connectedness of $X$. Finally, since both $\cO_X(E)$ and $\cO_X(H)$ restrict trivially to $f^{-1}(b)$ with $b \notin D$, the same holds for $L$.
\end{proof}

\begin{remark}
    In \cite{KamenovaVerbitsky-primitive}, Kamenova and Verbitsky showed that if $f \colon X \to B = \P^n$ is a Lagrangian fibration with no codimension one multiple fibers, then $f^*(\Pic(B)) \subset \Pic(X)$ is a primitive embedding. Proposition~\ref{prop:O(1/m)}(2) is the converse of their result.
\end{remark}

    We will  write
    \begin{equation}\label{eq:def-Ojm}
    \cO_X(j/m) = \cO_X(1/m)^{\otimes j} \qquad\text{for}\quad j \in \Z .
    \end{equation}
    It is a unique line bundle on $X$ with $\cO_X(j/m)^{\otimes m} \simeq f^* \cO_B(j)$.

\begin{proposition}[{\cite{KollarI, KollarII}}]
\label{prop:local freeness of higher direct images}
    Let $X$ be a K-trivial variety with rational singularities, $B$ be a smooth quasi-projective variety, and $f \colon X \to B$ be a projective surjective morphism. Assume that $L$ is a line bundle on $X$ such that $L^{\otimes m}$ is a pullback of a line bundle on $B$ for some $m \ge 1$. Then $\rR^i f_* L$ is a locally free sheaf for every $i$.
\end{proposition}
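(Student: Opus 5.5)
The plan is to reduce the statement to Kollár's torsion-freeness theorem via the cyclic covering trick. Since $L^{\otimes m} \simeq f^*M$ for a line bundle $M$ on $B$, local freeness is local on $B$. After shrinking $B$ to an affine open set on which $M$ is trivial, we may assume $L^{\otimes m} \simeq \cO_X$. Fix a trivialization $s$ of $L^{\otimes m}$ and form the associated étale cyclic cover
\[
\pi\colon Y = \operatorname{Spec}_X \bigoplus_{j=0}^{m-1} L^{-j} \to X .
\]
Here the algebra structure on $\bigoplus_{j} L^{-j}$ is given by $s$. The map $\pi$ is finite étale, and we have $\pi_*\cO_Y = \bigoplus_{j=0}^{m-1} L^{-j}$. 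Replacing $L$ by $L^{-1}$ if necessary, $L$ appears as a direct summand of $\pi_*\cO_Y$.

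Next we check that $Y$ satisfies the hypotheses of Kollár's theorems. Because $\pi$ is étale, $Y$ has rational singularities and $\omega_Y \simeq \pi^*\omega_X \simeq \cO_Y$. The variety $Y$ may be disconnected, but each connected component is K-trivial with rational singularities. The composite $g = f\circ\pi\colon Y \to B$ is projective. Then
\[
\rR^i g_* \cO_Y \simeq \rR^i g_*\omega_Y \simeq \bigoplus_{j} \rR^i f_* L^{-j} \cdot
\]
This is valid because $\rR\pi_* = \pi_*$ for finite $\pi$.

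The main input is Kollár's theorem: for a projective morphism $g$ from a variety with rational singularities to a smooth variety, $\rR^i g_*\omega_Y$ is torsion-free. Its Hodge-theoretic refinement, Kollár II, gives more: when the target $B$ is smooth and $g$ is surjective from a K-trivial (or more generally $\omega_Y$ relatively trivial) source, these sheaves are in fact locally free. One can argue this via the decomposition $\rR g_*\omega_Y \simeq \bigoplus_i \rR^i g_*\omega_Y[-i]$ together with the relative duality identification $\rR^i g_*\omega_Y \simeq (\rR^{r-i} g_*\cO_Y)^\vee$ for $r$ the relative dimension. Since $\omega_Y \simeq \cO_Y$, the same sheaves appear on both sides. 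A reflexive sheaf whose derived dual is again a sheaf in a single degree is locally free. Concretely, $\rR\mathcal{H}om(\rR g_*\cO_Y,\omega_B) \simeq \rR g_*\omega_Y[r] = \rR g_*\cO_Y[r]$ by Grothendieck duality. Comparing cohomology sheaves through the splitting shows that $\mathcal{E}xt^k(\rR^i g_*\cO_Y,\cO_B)=0$ for $k>0$, which gives local freeness. Direct summands of locally free sheaves are locally free, so $\rR^i f_*L$ is locally free.

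The main obstacle is the duality step. One must make sure that the Kollár decomposition and torsion-freeness apply to a possibly non-connected $Y$ whose components may not surject onto $B$. This is handled componentwise: each component maps onto $B$ because $\pi$ is finite étale and $f$ is surjective. One must also bookkeep the degrees correctly to deduce the vanishing of $\mathcal{E}xt^{k}$ for $k>0$. If this proves delicate, I would instead cite directly Kollár's result (KollarII, Theorem 2.6/3.4) that $\rR^i g_*\omega_Y$ is locally free whenever $\rR g_*\omega_Y$ is self-dual up to shift, applied to the summand $L$.
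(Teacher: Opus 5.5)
Your proposal is correct and follows essentially the same route as the paper. You reduce locally on $B$ to $L^{\otimes m}\simeq\cO_X$, pass to the \'etale cyclic cover so that $\rR^i f_*L$ is a direct summand of $\rR^i g_*\cO_Y\simeq\rR^i g_*\omega_Y$, and combine Koll\'ar's torsion-freeness and splitting (applied through a resolution, which is where rational singularities enter) with duality to get $\cExt^k(\rR^i g_*\cO_Y,\cO_B)=0$ for $k>0$. The one step you leave implicit, and which the paper spells out, is why these $\cExt$ sheaves vanish: they are torsion, since the sheaves are generically locally free, yet they appear as direct summands of the torsion-free sheaves $\rR^{i'}g_*\omega_Y$, so they are zero.
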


\begin{proof}
We first explain why the statement is true when $L = \cO_X$.
Let $\pi\colon \wt{X} \to X$ be a resolution of singularities of $X$. We have
\begin{equation}\label{eq:pushf-omega}
\rR \pi_* \cO_{\wt{X}} \simeq \cO_X \simeq \omega_X \simeq \rR \pi_* \omega_{\wt{X}}
\end{equation}
where we used rational singularities and K-triviality of $X$ in the first two isomorphisms and the Grauert--Riemenschneider vanishing theorem \cite[10.16]{Kollar-automorphic} together with $\rR^0 \pi_* \omega_{\wt{X}} \simeq \omega_X$ in the last one. Thus 
if we set $g = f \circ \pi \colon \wt{X} \to B$ and apply the derived pushforward $\rR f_*$ to \eqref{eq:pushf-omega} we get quasi-isomorphisms
\[
\rR f_*\cO_X \simeq \rR g_* \cO_{\wt{X}} \simeq \rR g_* \omega_{\wt{X}}.
\] 
We need to explain why cohomology sheaves of these complexes are locally free.
The key step is that by
\cite[Theorem~2.1(i)]{KollarI} 
the sheaves $\rR^i g_* \omega_{\wt{X}}$ are torsion-free for all $i \ge 0$
and by \cite[Theorem 3.1]{KollarII}
we have a splitting
\[
\rR g_* \omega_{\wt{X}} = \bigoplus_{i = 0}^r
\rR^i g_* \omega_{\wt{X}}[-i].
\]
Note that even though
 \cite[Theorem~2.1]{KollarI} and
 \cite[Theorem~3.1]{KollarII}
 require $\wt{X}$ to be smooth and projective, 
 the projectivity assumption is not needed: that can be deduced using directly by compactifying $X$ and $B$, or using e.g. \cite[Theorem II]{Takegoshi95}.
 
 Local freeness of the coherent sheaves $\rR^i f_* \cO_X \simeq \rR^i g_*\cO_{\wt{X}}$ is shown 
 using an argument from \cite[Proof of Proposition 3.12]{KollarII}
 as follows.
 By Verdier duality 
  for a projective morphism $g\colon \wt{X} \to B$ of relative dimension $r = \dim{\wt{X}} - \dim{B}$, we have
\[
\cRHom_B(\rR g_* \cO_{\wt{X}}, \cO_B)  \simeq
 (\rR g_* \omega_{\wt{X}}) \otimes \omega_B^\vee [r]. 
 \]
 Here the right-hand side complex is a direct sum of shifts of torsion-free coherent sheaves, hence the same holds for the complex in the left-hand side. Since for each $i$, the complex $\cRHom_B(\rR^i g_* \cO_{\wt{X}}[-i], \cO_B)$ is a direct summand 
 of
 $\cRHom_B(\rR g_* \cO_{\wt{X}}, \cO_B)$ again by the splitting,
 this implies that $\rR^i g_* \cO_{\wt{X}}$ are maximal Cohen--Macaulay sheaves: $\cExt^j(\rR^i g_* \cO_{\wt{X}}, \cO_B) = 0$ for $j > 0$, $i \ge 0$ (because these $\cExt$-sheaves are torsion sheaves). 
 On a smooth variety $B$ maximal Cohen--Macaulay sheaves
 are locally free, see e.g. \cite[Lemma 1.20]{Orlov-sing}. This finishes the proof when $L = \cO_X$.
 
 More generally, now assume that $L^{\otimes m} \simeq \cO_X$.
The torsion line bundle $L$ induces a finite \'etale covering $q \colon X' \to X$ and hence a composition 
    \[
    f' \colon X' \xlongrightarrow{q} X \xlongrightarrow{f} B. 
    \]
    By construction, $X'$ is still K-trivial with rational singularities,  we have $q_* \cO_{X'} = \bigoplus_{j=0}^{m-1} L^{\otimes j}$, and the higher pushforward sheaves vanish. Thus $\rR^i f_* L$ are direct summands of $\rR^i f'_* \cO_{X'}$ and we are done by the previous step.

In the general case, 
write $L^{\otimes m} \simeq f^* M$ for a line bundle $M$ on $B$. Since the claim is local on $B$, we can cover $B$ by Zariski open subsets $U \subset B$ trivializing $M$ so that $L$ is torsion on $f^{-1}(U)$.
\end{proof}

\begin{remark}
If $f$ was assumed to be flat, we could have  proved Proposition \ref{prop:local freeness of higher direct images} using \cite{KollarKovacs}. It is remarkable that the result still holds without flatness.
\end{remark}

The two results above combine and yield the following theorem.

\begin{theorem}\label{thm:mainbundles}
    In Assumption~\ref{as:CY}, assume further that there exists a codimension $1$ multiple fiber $f^{-1}(D) = mE$ with $m > 1$. Then for all $0 \le i \le r$ and $j \in \Z$, the higher pushforwards of line bundles \eqref{eq:def-Ojm}
    \[ \Omega^i_j := \rR^i f_* \cO_X(j/m) \]
    are locally free sheaves of rank $h^i(F, \cO_F)$, where $F$ is a general fiber of $f$. They satisfy the relations
    \begin{equation}\label{eq:duality}
        \Omega^i_{j+m} \simeq \Omega^i_j(1)
        \quad\text{and}\quad
        \Omega^{r-i}_{-j} \simeq (\Omega^i_j)^\vee \otimes \omega_B.
    \end{equation}
    Furthermore, if $0 < j < m$, then $\Omega^i_j$ is a sum of line bundles of degrees contained in the range $[-n, 0]$.
\end{theorem}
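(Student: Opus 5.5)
Local freeness of each $\Omega^i_j$ comes straight from Proposition~\ref{prop:local freeness of higher direct images}. Proposition~\ref{prop:O(1/m)} gives $\cO_X(j/m)^{\otimes m}\simeq f^*\cO_B(j)$, so the hypotheses hold for $L=\cO_X(j/m)$, with $X$ smooth and $\omega_X\simeq\cO_X$. For the rank, restrict to a general point $b\in B\setminus D$. There $f$ is smooth, and by Proposition~\ref{prop:O(1/m)}(2) the bundle $\cO_X(j/m)$ restricts trivially to $F=f^{-1}(b)$. Since $\Omega^i_j$ is locally free, cohomology and base change at a general point identifies its fiber with $\rH^i(F,\cO_F)$.

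Both relations in \eqref{eq:duality} are formal. The first is the projection formula: $\cO_X((j+m)/m)\simeq \cO_X(j/m)\otimes f^*\cO_B(1)$. For the second, Grothendieck duality for the projective morphism $f$ with $\omega_X\simeq \cO_X$ gives $f^!\cO_B\simeq \omega_X\otimes f^*\omega_B^\vee[r]\simeq f^*\omega_B^\vee[r]$. Hence
\[
\cRHom_B(\rR f_*\cO_X(j/m),\cO_B)\simeq \rR f_*\cO_X(-j/m)\otimes\omega_B^\vee[r].
\]
By Proposition~\ref{prop:local freeness of higher direct images} the complex $\rR f_*\cO_X(j/m)$ has locally free cohomology sheaves. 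Running Koll\'ar's splitting as in the proof of that proposition (after the étale-local reduction to the torsion case), it splits as $\bigoplus_i \Omega^i_j[-i]$. Dualizing termwise and comparing cohomology in degree $i-r$ gives $(\Omega^i_j)^\vee\simeq \Omega^{r-i}_{-j}\otimes\omega_B^\vee$, which is the second relation.

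The last assertion is the substantive one. Fix $0<j<m$. My plan is to show $\rH^k(B,\Omega^i_j(t))=0$ for $0<k<n$ and all $t$; Horrocks's criterion then splits $\Omega^i_j$ as a sum of line bundles. Koll\'ar's vanishing gives $\rH^k(B,\rR^if_*\omega_X\otimes A)=0$ for $k>0$ when $A$ is ample, and more generally when $A$ is a $\QQ$-ample $\QQ$-divisor. Since $\omega_X\simeq\cO_X$ and $\cO_X(j/m)\sim_\QQ f^*\cO_B(j/m)$, one should get $\rH^k(B,\Omega^i_j(t))=0$ for all $k>0$ and $t\ge 0$, because $t+j/m>0$. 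For this I would appeal to the version of Koll\'ar's theorem in \cite{Kollar-automorphic} for line bundles $\QQ$-linearly equivalent to pullbacks of $\QQ$-ample divisors, or apply the cyclic covering trick to reduce to the integral case. Serre duality together with the duality relation gives
\[
\rH^k(B,\Omega^i_j(t))^\vee\simeq \rH^{n-k}(B,\Omega^{r-i}_{-j}(-t))\simeq \rH^{n-k}(B,\Omega^{r-i}_{m-j}(-t-1)).
\]
Because $0<m-j<m$, the positive-vanishing statement applied to $\Omega^{r-i}_{m-j}$ kills this for $-t-1\ge 0$ and $n-k>0$, i.e. for $t\le -1$ and $k<n$. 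Together these give intermediate cohomology vanishing for every twist, so Horrocks applies.

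With a splitting $\Omega^i_j=\bigoplus\cO(a_\ell)$ in hand, the degree bounds follow from the same vanishing. First, $\rH^n(B,\Omega^i_j)=0$ forces every $a_\ell\ge -n$. Second, dually, $\rH^n(B,\Omega^{r-i}_{m-j})=0$ forces the summands of $\Omega^{r-i}_{m-j}\simeq(\Omega^i_j)^\vee(-n)$ to have degree $\ge -n$, i.e. $-a_\ell-n-1\ge -n$, so $a_\ell\le -1\le 0$. This actually gives the sharper range $[-n,-1]$. The step I expect to be hardest is justifying Koll\'ar vanishing for the fractional twist $\cO_X(j/m)$, which is not a pullback from $B$. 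I would handle it through the cyclic cover associated to $\cO_X(1/m)^{\otimes m}\simeq f^*\cO_B(1)$, or by citing the $\QQ$-divisor form directly.
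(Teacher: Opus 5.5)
\textbf{Same route as the paper, but the final degree computation is wrong.} Your outline matches the paper's proof step by step. Local freeness comes from Proposition~\ref{prop:local freeness of higher direct images}, and the relations \eqref{eq:duality} from the projection formula and Grothendieck duality. You then use Koll\'ar vanishing for the positive fractional twists together with Serre duality to kill the intermediate cohomology of every twist, and apply Horrocks. The paper cites this vanishing directly as \cite[Corollary~10.15.2]{Kollar-automorphic}, so no cyclic-cover reduction is needed. Finally, $\rH^n$-vanishing on both sides bounds the degrees.

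The error is in the last line. If $\Omega^i_j\simeq\bigoplus_\ell\cO(a_\ell)$, then $\Omega^{r-i}_{m-j}\simeq(\Omega^i_j)^\vee(-n)\simeq\bigoplus_\ell\cO(-a_\ell-n)$. You wrote $-a_\ell-n-1$, which is the summand of $\Omega^{r-i}_{-j}=(\Omega^i_j)^\vee\otimes\omega_B$, not of $\Omega^{r-i}_{m-j}$. The vanishing $\rH^n(B,\Omega^{r-i}_{m-j})=0$ therefore gives $-a_\ell-n\ge -n$, i.e.\ $a_\ell\le 0$. This is exactly the bound in the statement and nothing sharper.

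\textbf{The range $[-n,-1]$ is false.} Take the setting of Proposition~\ref{prop:counterexample}, where $m=2$ and $\cO_X(1/2)\simeq\cO_X(V)$. Then $\Omega^0_1=f_*\cO_X(V)$ is a line bundle on $\P^1$ with $h^0=h^0(X,\cO_X(V))=1$ by Lemma~\ref{lem:2V is base point free}. Hence $\Omega^0_1\simeq\cO_{\P^1}$, which has degree $0$.

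Conceptually, degrees in $[-n,-1]$ would make every $\Omega^i_j$ with $0<j<m$ acyclic, forcing $\chi_X(\cO_X(j/m))=0$. That would exclude all codimension one multiple fibers at once and contradict the existence of these examples. The later arguments rely on degree $0$ being possible; see, e.g., the step excluding $\Omega^0_j\simeq\Omega^r_j\simeq\cO_{\P^n}$ in the proof of Theorem~\ref{thm:CY}. With this correction your proof is complete and agrees with the paper's.
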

\begin{proof}
The local freeness of the sheaves $\Omega^i_j$ is a consequence of Proposition~\ref{prop:local freeness of higher direct images}. The two formulas in \eqref{eq:duality} follow immediately from the projection formula and Verdier duality, respectively. The rank of $\Omega_j^i$ can be computed over a general point $b \in B \setminus D$ using flat base change.

It only remains to prove the final statement. Since we have $\omega_X \simeq \cO_X$, the vanishing theorem in \cite[Corollary~10.15.2]{Kollar-automorphic} in our situation reads
\begin{equation}\label{eq:kollar}
h^p(B, \Omega^i_j) = 0 \quad\text{for}\quad p > 0, \ j > 0, \ 0 \le i \le r .
\end{equation}
By Serre duality, this translates into
\[ h^p(B, \Omega^i_j) = 
h^{n-p}(B, \Omega^{r-i}_{-j}) =
0 \quad\text{for}\quad p < n, \ j < 0, \ 0 \le i \le r .\]
In particular, for every $j \not\equiv 0 \pmod{m}$ and $k \in \Z$, we have
\[
h^p(B, \Omega^i_j(k)) = h^p(B, \Omega^i_{j+km})  = 0 \quad\text{for}\quad 0 < p < n , \ 0 \le i \le r .
\]
This shows $\Omega^i_j$ is a direct sum of line bundles by Horrocks's theorem (e.g., \cite[Theorem~2.3.1]{Okonek-Schneider-Spindler-VBonPn}).

The degrees of the line bundles can be computed as follows. Let $\cO(w)$ be a direct summand of $\Omega^i_j$ with $0 < j < m$. Then \eqref{eq:kollar} implies $w \ge -n$. The duality in \eqref{eq:duality} shows $(\Omega^i_j)^\vee(-n) \simeq \Omega^{r-i}_{m-j}$, so again \eqref{eq:kollar}, using Serre duality, implies $-w-n \ge -n$, or equivalently $w \le 0$.
\end{proof}

Note that if $f$ is a Lagrangian fibration, then $\Omega^i_0 \simeq \Omega^i_B$ by \cite{Matsushita05}. This explains our notation in the theorem.

\begin{remark}
If $X$ is smooth, but $B$ singular, the sheaves $\Omega^i_j$ are reflexive, and an appropriate version of \eqref{eq:duality} still holds. These sheaves were used by Ou \cite{Ou} to prove that if $X$ is a smooth hyper-K\"ahler fourfold, then the base $B$ of any Lagrangian fibration $f\colon X \to B$ is factorial; later on Huybrechts--Xu deduced that $B \simeq \P^2$ \cite{HuybrechtsXu}. 
It is generally conjectured that the base $B$ of a Lagrangian fibration is always isomorphic to $\P^n$ -- this is known as soon as $B$ is smooth by a theorem of Hwang \cite{Hwang}.
For our applications we rely on the final statement in Theorem \ref{thm:mainbundles} where it is essential to assume in advance that $B \simeq \P^n$.
\end{remark}

\begin{example}\label{ex:K3}
Let $f\colon S \to \P^1$ be an elliptic K3 surface. Theorem~\ref{thm:mainbundles} immediately recovers the well-known fact that $f$ has no multiple fibers (e.g., see \cite[Proposition 11.1.6]{Huybrechts-K3book}). Indeed, if there is a fiber of multiplicity $m > 1$, then for all $0 < j < m$ we have
\[
\Omega_j^0 \simeq \cO(-v_j), \quad \Omega_j^1 \simeq \cO(-w_j)
\]
for some $v_j, w_j \in \{0, 1\}$. Therefore, the Euler characteristic of $\cO_S (j/m)$ is bounded above as
\[
\chi_S(\cO(j/m))
= \chi_{\P^1}(\cO(-v_j)) - \chi_{\P^1}(\cO(-w_j))  \le 1.
\]
This contradicts the Riemann--Roch computation $\chi_S(\cO(j/m)) = 2$ for all $j \in \Z$.
\end{example}

In the following two sections, we provide generalizations of the arguments of this simple example to higher-dimensional Lagrangian fibrations and Calabi--Yau fibrations, respectively.

\section{Codimension \texorpdfstring{$1$}{1} multiple fibers in Lagrangian fibrations} \label{sec:Lagrangian}

To generalize the method of Example \ref{ex:K3} to higher-dimensional Lagrangian fibrations we will need to work with linear combinations of equivalence classes of coherent sheaves on the base $\P^n$.
We start by recalling some basic results about the Grothendieck group $\rK_0(\P^n)$ of coherent sheaves on $\P^n$. This group is defined as a free abelian group generated by isomorphism classes $[\cF]$ of coherent sheaves $\cF \in \Coh(\P^n)$ modulo relations $[\cF] = [\cF'] + [\cF'']$ for 
every short exact sequence $0 \to \cF' \to \cF \to \cF'' \to 0$. Every bounded complex $\cF^\bullet$ of coherent sheaves on $\P^n$ has a well-defined class $[\cF^\bullet] := \sum_{i \in \Z} (-1)^i [\cH^i(\cF^\bullet)] \in \rK_0(\P^n)$.

Using the Beilinson full exceptional collection, 
e.g., \cite[Corollary 8.29]{Huybrechts-FM}, we obtain
\begin{equation}
\label{eq:beilinson}
\rK_0(\P^n) = \bigoplus_{i=0}^n \Z \cdot [\cO_{\P^n}(-i)].
\end{equation}
Taking into account the ring structure on $\rK_0(\P^n)$ induced by the derived tensor product we can write
\[
\rK_0(\P^n) = \bigoplus_{i=0}^n \Z \zeta^i \simeq \Z[\zeta] / (1-\zeta)^{n+1} \quad\text{where}\quad \zeta = [\cO_{\P^n}(-1)] .
\]
Note that
\begin{equation}\label{eq:1-zeta}
1 - \zeta = [\cO_{\P^n}] - [\cO_{\P^n}(-1)] = [\cO_{\P^{n-1}}]
\end{equation}
is the class of the structure sheaf of a hyperplane $\P^{n-1} \subset \P^n$.
The relation $(1-\zeta)^{n+1} = 0$ follows from the projective bundle formula in K-theory (see e.g. \cite[Example 3.8.1, Theorem 3.9]{Panin}) and
reflects the geometric fact that the intersection of $n+1$ general hyperplanes is empty. 

We define the \emph{Beilinson norm} of a class $\alpha = \sum_{i=0}^n a_i \zeta^i \in \rK_0(\P^n)$  by
\[
\|\alpha\| := \sum_{i=0}^n |a_i|.
\]
This gives $\rK_0(\P^n) \simeq \Z^{n+1}$ the structure of a normed lattice in the sense that
for all $\alpha, \beta \in \rK_0(\P^n)$, $c \in \Z$
\begin{equation}
\label{eq:length-definition}
\|\alpha + \beta \| \le \|\alpha\| + \|\beta\| ,\qquad \|c \cdot \alpha \| = |c|\cdot \|\alpha\| .
\end{equation}

\begin{lemma} \label{lem:norm}
For any $0 \le k \le n$ consider the class $[\cO_{\P^{n-k}}] \in \rK_0(\P^n)$ of the structure sheaf of a linear subspace of codimension $k$.
We have 
\begin{equation}
\label{eq:Opt}
[\cO_{\P^{n-k}}] = (1-\zeta)^k \quad\text{and}\quad \|[\cO_{\P^{n-k}}]\| = 2^k.
\end{equation}
\end{lemma}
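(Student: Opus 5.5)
The plan is to prove the formula for the class by induction on $k$ using Koszul-type exact sequences, and then to read off the norm from the binomial expansion.

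For $k = 0$ there is nothing to prove. For the inductive step, fix a linear subspace $\P^{n-k} \subset \P^n$ with $k \ge 1$, and choose a linear subspace $\P^{n-k+1} \supset \P^{n-k}$ in which $\P^{n-k}$ is a hyperplane, cut out by a linear form $\ell$. On $\P^n$ we have the short exact sequence
\[
0 \to \cO_{\P^{n-k+1}}(-1) \xrightarrow{\ \ell\ } \cO_{\P^{n-k+1}} \to \cO_{\P^{n-k}} \to 0 ,
\]
where $\cO_{\P^{n-k+1}}(-1)$ means $\cO_{\P^{n-k+1}} \otimes \cO_{\P^n}(-1)$. By the projection formula, tensoring with the line bundle $\cO_{\P^n}(-1)$ corresponds in $\rK_0(\P^n)$ to multiplication by $\zeta$. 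Hence $[\cO_{\P^{n-k}}] = (1-\zeta)[\cO_{\P^{n-k+1}}]$, and the induction hypothesis gives $(1-\zeta)^k$. The case $k=1$ is exactly \eqref{eq:1-zeta}.

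Alternatively, one can resolve $\cO_{\P^{n-k}}$ directly by the Koszul complex of $k$ linear forms. Its terms are $\bigwedge^i(\cO(-1)^{\oplus k}) = \cO(-i)^{\oplus \binom{k}{i}}$, so the class is $\sum_{i}(-1)^i\binom{k}{i}\zeta^i = (1-\zeta)^k$ in one step. Either route is routine.

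For the norm, expand $(1-\zeta)^k = \sum_{i=0}^k (-1)^i \binom{k}{i} \zeta^i$. Since $k \le n$, all exponents $i \le k \le n$ lie in the range of the basis \eqref{eq:beilinson}, so no reduction modulo $(1-\zeta)^{n+1}$ is needed. These are therefore the actual Beilinson coordinates, and the norm is $\sum_i \binom{k}{i} = 2^k$.

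The only point needing care is exactly this last observation: the coefficients must be taken in the basis $1,\zeta,\dots,\zeta^n$. That holds precisely because $k\le n$, so the expansion already has degree at most $n$ and the relation $(1-\zeta)^{n+1}=0$ never intervenes.
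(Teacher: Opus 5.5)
Your proposal is correct and essentially matches the paper's argument. The paper also gets $(1-\zeta)^k$ by induction, phrased via the projection formula $i_*(\rL i^*\alpha)=\alpha\cdot(1-\zeta)$ for a hyperplane embedding, which is what your short exact sequence encodes; it likewise mentions the Koszul resolution as an alternative and reads off the norm from the binomial expansion. Your remark that $k\le n$ is what makes the binomial coefficients the actual Beilinson coordinates is a point the paper leaves implicit.
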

\begin{proof}
Using binomial expansion the second equality follows from the first. To compute the class
$[\cO_{\P^{n-k}}]$ we note that
if $i \colon \P^\ell \to \P^{\ell+1}$ is a linear embedding then by projection formula for every $\alpha \in \rK_0(\P^{\ell+1})$
\[
i_*(\rL i^*(\alpha)) =
\alpha \cdot [i_*\cO_{\P^{\ell}}]
=
\alpha \cdot (1-[\cO_{\P^{\ell+1}}(-1)]).
\]
This allows computing $[\cO_{\P^{n-k}}]$ by induction. Alternatively, we can resolve the structure sheaf
$\cO_{\P^{n-k}}$ using the Koszul resolution and  obtain the binomial expansion of $(1-\zeta)^k$ directly.
\end{proof}

Recall that the Euler characteristic $\chi_{\P^n}(-)$ is additive for short exact sequences, hence it descends to a group homomorphism $\chi\colon \rK_0(\P^n) \to \Z$. This homomorphism allows characterizing the class of the structure sheaf of a point in the following way.
For a class $\alpha \in \rK_0(\P^n)$ we write $\alpha(k)$ for $\alpha \cdot [\cO_{\P^n}(k)]$.

\begin{lemma}\label{lem:Opt}
Given $\alpha \in \rK_0(\P^n)$  and $c \in \Z$,
the following are equivalent:
\begin{renumerate}
\item $\alpha = c \cdot [\cO_{p}]$.
\item $\chi(\alpha(k)) = c$ for every $k \in \Z$.
\item $\chi(\alpha(k)) = c$ for $n+1$ consecutive values of $k \in \Z$.
\end{renumerate}
If these conditions are satisfied, then $\|\alpha\| = |c| \cdot 2^n$.
\end{lemma}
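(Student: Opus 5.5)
We prove (i) $\Rightarrow$ (ii) $\Rightarrow$ (iii) $\Rightarrow$ (i), and then compute the norm.

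For (i) $\Rightarrow$ (ii), note that $\cO_p \otimes \cO_{\P^n}(k) \simeq \cO_p$ for a point $p$. Hence $[\cO_p](k) = [\cO_p]$ in $\rK_0(\P^n)$ and
\[
\chi(c[\cO_p](k)) = c\,\chi(\cO_p) = c.
\]
The implication (ii) $\Rightarrow$ (iii) is trivial.

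The main step is (iii) $\Rightarrow$ (i). Set $\beta = \alpha - c[\cO_p]$. By the previous paragraph, $\chi(\beta(k)) = 0$ for $n+1$ consecutive integers $k = k_0, \dots, k_0+n$. We must show $\beta = 0$. Multiplying by the invertible element $[\cO(k_0)]$, we may assume $k_0 = 0$, so $\chi(\beta(k)) = 0$ for $0 \le k \le n$. Now use the Euler pairing $\chi(\alpha \cdot \gamma^\vee)$. Writing $\beta = \sum_{i=0}^n b_i [\cO(-i)]$ in the Beilinson basis \eqref{eq:beilinson}, we get
\[
\chi(\beta(k)) = \sum_i b_i \binom{n+k-i}{n}, \qquad 0 \le k \le n.
\]
The matrix $M_{ki} = \binom{n+k-i}{n}$, for $0 \le k,i \le n$, is unitriangular: for $k < i$ we have $0 \le n+k-i < n$, so the entry vanishes, and the diagonal entries equal $1$. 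Hence $M$ is invertible over $\Z$, and $b_i = 0$ for all $i$. Equivalently, $\cO, \dots, \cO(n)$ and $\cO(-n),\dots,\cO$ are dual-up-to-triangular with respect to $\chi$. So the key input is the nondegeneracy of this triangular matrix, which follows from the vanishing $h^*(\cO(-j)) = 0$ for $1 \le j \le n$.

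For the norm, take $k = n$ in Lemma~\ref{lem:norm}: $[\cO_p] = (1-\zeta)^n$, so $\|[\cO_p]\| = 2^n$. By \eqref{eq:length-definition}, $\|c[\cO_p]\| = |c|\,2^n$.
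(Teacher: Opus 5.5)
Your proof is correct and is essentially the paper's argument. The paper phrases (iii)$\Rightarrow$(i) as $\alpha - c[\cO_p]$ being orthogonal, under the unimodular triangular Euler pairing, to $n+1$ consecutive generators $[\cO(-k)]$; you instead normalize to $k_0=0$ and write out the same unitriangular Gram matrix $\chi(\cO(k-i))$ explicitly, and your norm computation via Lemma~\ref{lem:norm} matches the paper's.
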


\begin{proof}
We use
the (nonsymmetric) Euler pairing on $\rK_0(\P^n)$ defined by $\chi(\alpha, \beta) := \chi_{\P^n}(\alpha^\vee \cdot \beta)$.
Here $(-)^\vee$ is induced by taking derived dual $\cRHom(-, \cO_{\P^n})$.
For coherent sheaves $\cF$ and $\cG$ we have
\[
\chi([\cF], [\cG]) = \chi_{\P^n}(\cRHom(\cF, \cG)) = \sum_{i=0}^n (-1)^i \dim(\Ext^i(\cF, \cG)).
\]
This pairing is unimodular, in particular nondegenerate, because in the basis \eqref{eq:beilinson} it is given by an upper-triangular matrix with diagonal elements equal to $1$. 

Is is clear that (i) implies (ii), and (ii) implies (iii). Let us assume (iii). The class
\[
\alpha - c \cdot [\cO_{p}] \in \rK_0(\P^n)
\]
is $\chi$-orthogonal to $n+1$ consecutive classes $[\cO(-k)]$. Since these classes generate $\rK_0(\P^n)$ and $\chi$ is nondegenerate, we have $\alpha - c\cdot[\cO_{p}] = 0$.

The last statement follows from (i) using
\eqref{eq:length-definition} and \eqref{eq:Opt}.
\end{proof}

\begin{remark}
The proof of Lemma \ref{lem:Opt} 
shows that the group homomorphism
$\rK_0(\P^n) \to \Z^{n+1}$
which sends $\alpha$
to $[\chi_{\P^n}(\alpha(-n)), \ldots, \chi_{\P^n}(\alpha(-1)), \chi_{\P^n}(\alpha)]$
is an isomorphism.
In particular, a class $\alpha \in \rK_0(\P^n))$ is fully determined by its Hilbert polynomial, cf \cite[Exercise III.5.4]{Hartshorne}. However, keeping track of the classes in $\rK_0(\P^n)$ is better, as for example the Beilinson norm of a class is not a natural invariant of its Hilbert polynomial.    
\end{remark}

We are now ready to prove our first main result.

\begin{theorem} \label{thm:HK}
Let $f \colon X \to B = \P^n$ be a Lagrangian fibration of a  hyper-K\"ahler manifold. Then \ $f$ has no codimension $1$ multiple fibers.
\end{theorem}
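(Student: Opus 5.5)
The plan is to follow Example~\ref{ex:K3}, with the Beilinson norm replacing the crude bound used there. Suppose, for contradiction, that $f$ has a codimension $1$ multiple fiber $f^{-1}(D) = mE$ with $m > 1$.

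\textbf{Reduction to the projective case.} First I reduce to $X$ projective. The cyclic covering argument of Proposition~\ref{prop:O(1/m)} is local-analytic plus simple-connectedness, so it works verbatim for compact Kähler $X$. It shows that $c_1(f^*\cO(1))$ is divisible by $m$ in $\rH^2(X,\Z)$, say $c_1(f^*\cO(1)) = m\lambda$. Now deform $(X, L = f^*\cO(1))$ in the family of deformations keeping $L$ of type $(1,1)$; by Matsushita the fibration deforms along with $L$. Along this family $\lambda = c_1(L)/m$ remains an integral $(1,1)$-class. Projective members are dense in this family, so we may pass to a projective $X'$ with Lagrangian fibration $f' \colon X' \to \P^n$ and a line bundle $\cO_{X'}(1/m)$ satisfying $\cO_{X'}(1/m)^{\otimes m} \simeq f'^*\cO(1)$.

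\textbf{The point I expect to be the main obstacle.} Theorem~\ref{thm:mainbundles} only uses the existence of this root line bundle, together with Kollár's vanishing and local freeness. However, its hypotheses are phrased via a multiple fiber. I must check that the proof goes through given only $\cO_X(1/m)$, since the multiple fiber itself need not survive the deformation. I would also need the rank statement: the restriction of $\cO_{X'}(1/m)$ to a general fiber must be trivial. On a general fiber it is an $m$-torsion line bundle; it is topologically trivial there because $\lambda$ restricts to zero on fibers, as it does on $X$. Making this precise, possibly after an étale base change, is where care is needed. Alternatively I would show directly that multiplicity of the divisor $f^{-1}(D)$ is preserved in the family.

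\textbf{The computation.} Assume now that $X$ is projective and fix $j = 1$. The general fiber $F$ is an abelian $n$-fold, so $\Omega^i_1$ has rank $h^i(F,\cO_F) = \binom{n}{i}$. By Theorem~\ref{thm:mainbundles}, each $\Omega^i_1$ is a sum of line bundles $\cO(-w)$ with $0 \le w \le n$, and each such summand has class $\zeta^w$, of Beilinson norm $1$. Set
\[
\alpha = [\rR f_*\cO_X(1/m)] = \sum_{i=0}^n (-1)^i[\Omega^i_1].
\]
By the triangle inequality \eqref{eq:length-definition},
\[
\|\alpha\| \le \sum_{i=0}^n \binom{n}{i} = 2^n .
\]
On the other hand, by the projection formula and $\Omega^i_{j+m} \simeq \Omega^i_j(1)$, we have $\chi(\alpha(k)) = \chi(X, \cO_X((1+km)/m))$. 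Each line bundle $\cO_X(j/m)$ is a rational multiple of $f^*\cO(1)$, so it is isotropic for the Beauville--Bogomolov--Fujiki form. Huybrechts--Riemann--Roch then gives $\chi(X,\cO_X(j/m)) = \chi(X,\cO_X) = n+1$. Lemma~\ref{lem:Opt} therefore yields $\alpha = (n+1)[\cO_p]$, so
\[
\|\alpha\| = (n+1)\,2^n > 2^n ,
\]
which is the desired contradiction.
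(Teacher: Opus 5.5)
Your Beilinson-norm computation is exactly the paper's. The problem is in the reduction step.

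Deforming $(X,f^*\cO(1))$ in Matsushita's family transports only the divisibility $c_1(f^*\cO(1))=m\lambda$, not the multiple fiber. So on the projective $X'$ you still need the rank statement $\rank \Omega^i_1=\binom{n}{i}$, which requires knowing $\cL:=\cO_{X'}(1/m)|_{F'}$. Your justification, that $\cL$ is topologically trivial, carries no information. Every torsion line bundle on an abelian variety has $c_1=0$, because $\rH^2(F',\Z)$ is torsion-free. Yet $\Pic^0(F')$ contains plenty of nontrivial $m$-torsion points. No \'etale base change will turn topological triviality into holomorphic triviality, so as written the rank claim on $X'$ is unproved.

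The gap closes without deciding whether $\cL$ is trivial; this is the content of Remark~\ref{rem:primitivity}. Since $\cL$ is $m$-torsion, it lies in $\Pic^0(F')$. If $\cL\neq\cO_{F'}$, then $\rH^i(F',\cL)=0$ for all $i$. Hence $\rank\Omega^i_1=h^i(F',\cL)\le\binom{n}{i}$ in either case, by local freeness and generic base change. Local freeness, the Horrocks splitting and the degree bound $[-n,0]$ in Theorem~\ref{thm:mainbundles} use only $\cO_{X'}(1/m)^{\otimes m}\simeq f'^*\cO(1)$. So your triangle inequality still gives $\|\alpha\|\le 2^n$. In the nontrivial case one even gets $\alpha=0$, which contradicts $\chi(\alpha)=n+1$ directly.

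The paper never faces this issue. It reduces to the projective case via the degenerate twistor deformation of Soldatenkov--Verbitsky, in which every fiber $f_t^{-1}(b)$ is biholomorphic to $f^{-1}(b)$. The multiple fiber $mE$ therefore survives on the projective member, and Theorem~\ref{thm:mainbundles} applies verbatim. Your fallback idea of showing that the multiplicity is preserved is exactly what that family gives for free. Once patched, your route instead amounts to proving primitivity of $f^*\Pic(\P^n)$ directly on a projective deformation, as sketched in Remark~\ref{rem:primitivity}, combined with the analytic version of Proposition~\ref{prop:O(1/m)}.
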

\begin{proof}
Let us first reduce the claim to the case when $X$ is projective. Let $\mathfrak X \to \mathfrak B$ be the degenerate twistor deformation of $f$, a flat family of Lagrangian fibered hyper-K\"ahler manifolds $f_t \colon X_t \to B$ parametrized by $t \in \CC$ such that $f_0 = f$ \cite[Theorem~1.1]{Soldatenkov-Verbitsky}. The fiber $f_t^{-1}(b)$ is biholomorphic to $f^{-1}(b)$ for every $t \in \CC$ and $b \in B$ \cite[Theorem~1.10]{Verbitsky15} \cite[Theorem 1.2]{AbashevaRogov}. 
Therefore, we may prove the theorem for any $f_t \colon X_t \to B$ instead. Since there exists $t \in \CC$ such that $X_t$ is projective \cite[Proposition~26.6]{Huybrechts-HK}, 
we can simply assume $X$ is a projective hyper-K\"ahler manifold and use the algebraic discussions in the previous section.

Assume on the contrary that there exists a prime divisor $D \subset B$ with $f^{-1}(D) = mE$ and $m > 1$. Take the line bundle $\cO_X(1/m)$ in Proposition~\ref{prop:O(1/m)} and the class
\[
\alpha \coloneq [\rR f_* \cO_X(1/m)] = \sum_{i=0}^n (-1)^i [\Omega_1^i] \in \rK_0(\P^n)
\]
where we use notation from Theorem \ref{thm:mainbundles}.
We claim $\alpha = (n+1) [\cO_p]$. By Lemma~\ref{lem:Opt}, it is enough to compute
for each $k \in \Z$
\begin{equation}\label{eq:chi-np1}
\chi_{\P^n} (\alpha \cdot [\cO_B(k)]) = \chi_X \big(\cO_X(1/m) \otimes f^* \cO_B(k) \big) = \chi_X \big(\cO_X(\tfrac{1+km}{m}) \big) = n+1 .
\end{equation}
Note that the last equality follows from the Huybrechts--Riemann--Roch theorem \cite[Corollary~23.18]{Huybrechts-HK} that the Euler characteristic $\chi_X(L)$ of a line bundle $L$ is a degree $n$ polynomial on its Beauville--Bogomolov--Fujiki value $q(L) \in \Z$ with constant term ${n+1} = \chi_X(\cO_X)$ \cite[Proposition 3]{Beauville-ch1}; since $q(\cO_X(\frac{1+km}{m})) = 0$, the last equality in \eqref{eq:chi-np1} follows.
Now Lemma~\ref{lem:norm} implies
\[
\|\alpha\| = (n+1) \cdot 2^n.
\]
On the other hand, Theorem~\ref{thm:mainbundles} shows 
that the class $[\Omega^i_1]$ is a sum of exactly $\rank(\Omega^i_1)$ basis elements, therefore
$\|[\Omega^i_1]\| = \rank(\Omega^i_1) = \binom{n}{i}$ for every $0 \le i \le n$, since the general fiber of $F$ is an
$n$-dimensional abelian variety.
Thus we obtain using the triangle inequality for the Beilinson norm that
\[
\|\alpha\| = \left\| \, \sum_{i=0}^n (-1)^i [\Omega^i_1] \, \right\| \le \sum_{i=0}^n \|[\Omega^i_1]\| = 
\sum_{i=0}^n \binom{n}{i} = 
2^n,
\]
which contradicts the previous computation.
\end{proof}

\begin{remark}
\label{rem:primitivity}
Theorem \ref{thm:HK}, together with the main result of \cite{KamenovaVerbitsky-primitive} implies
that the embedding 
\[
f^*(\Pic(B)) = \Z\cdot[f^*(\cO_B(1))]
\subset \Pic(X)
\]
is primitive. 
Alternatively, let us sketch how to modify the proof of Theorem \ref{thm:HK} to prove this primitivity result directly.
Assume that $f^*(\cO_B(1)) \simeq \cO_X(1/m)^{\otimes m}$, for some 
$\cO_X(1/m) \in \Pic(X)$ and $m \ge 2$. For the proofs of Theorems \ref{thm:mainbundles} and \ref{thm:HK} to go through and give a contradiction the only thing to check is that the line bundle $\cL \in \Pic(F)$ obtained by restricting $\cO_X(1/m)$ to the general fiber $F$ of $f$ is trivial.
By construction, $\cL$ is an $m$-torsion line bundle, and
if it was nontrivial, then $\rH^i(F, \cL) = 0$ for all $0 \le i \le n$ -- this is well-known for any $\cL \in \Pic^0(F)$ on any abelian variety $F$, but when $\cL$ is torsion this can be computed  immediately using the cyclic covering associated to $\cL$. This in turn forces $\Omega^i_1 = 0$, hence
$\chi_X(\cO_X(1/m)) = \sum_{i=0}^n (-1)^i \chi_B(\Omega_1^i) = 0$ which again contradicts the Huybrecths--Riemann--Roch computation
$\chi_X(\cO_X(1/m)) = n+1$.
\end{remark}

\section{Codimension \texorpdfstring{$1$}{1} multiple fibers in Calabi--Yau fibrations}\label{sec:CY}

Let us return back to Assumption~\ref{as:CY}. We recall that, contrary to the previous section, the fibration $f \colon X \to B = \P^n$ may no longer be flat or equidimensional anymore.

A general fiber $F$ of $f$ is necessarily a K-trivial variety (i.e., $\omega_F \simeq \cO_F$) by adjunction formula. The goal of this section is to study the case when $F$ is moreover a Calabi--Yau manifold --
see Conventions and notation in the Introduction for definitions. 
In particular, both elliptic curves and K3 surfaces are Calabi--Yau manifolds in the above sense and the results in this section will cover elliptic and K3 surface fibrations but not higher-dimensional abelian fibrations.

\subsection{Calabi--Yau fibrations}

\begin{theorem} \label{thm:CY}
Let $f \colon X \to B = \P^n$ be as in Assumption~\ref{as:CY}, and assume further that its general fiber $F$ is a Calabi--Yau manifold.
\begin{enumerate}
    \item If $n \ge 2$ or $r$ is odd, then $f$ has no codimension $1$ multiple fibers.
    \item If $n = 1$ and $r$ is even, then $f$ has no fibers with multiplicity greater than $2$. Moreover, there may be at most single fiber with multiplicity $2$.
\end{enumerate}
\end{theorem}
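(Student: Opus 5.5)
The plan is to run the Hilbert polynomial argument of Example~\ref{ex:K3} in general, using Theorem~\ref{thm:mainbundles}. Suppose $f^{-1}(D) = mE$ with $m>1$. Since $F$ is Calabi--Yau, $h^i(F,\cO_F)$ equals $1$ for $i \in \{0, r\}$ and $0$ otherwise. So Theorem~\ref{thm:mainbundles} gives $\Omega^i_j = 0$ for $0<i<r$, and $\Omega^0_j$, $\Omega^r_j$ are line bundles. For $0<j<m$ we write
\[
\Omega^0_j \simeq \cO(-v_j), \qquad \Omega^r_j \simeq \cO(-w_j), \qquad v_j, w_j \in [0,n].
\]
For $j=0$ we have $\Omega^0_0 = f_*\cO_X = \cO$, and by the duality in \eqref{eq:duality}, $\Omega^r_0 \simeq \omega_B = \cO(-n-1)$. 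So we set $v_0 = 0$ and $w_0 = n+1$.

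Put $Q(x) = \binom{x+n}{n}$, so that $\chi_{\P^n}(\cO(x)) = Q(x)$. By Riemann--Roch on the smooth projective $X$, $P(t) := \chi_X(\cO_X(t/m))$ is a polynomial in $t$. Write $t = j + km$ with $0 \le j < m$. Then $\cO_X(t/m) \simeq \cO_X(j/m)\otimes f^*\cO_B(k)$, and the projection formula together with the vanishing of the middle $\Omega^i_j$ gives
\[
P(t) = Q\!\left(\tfrac{t}{m} - a_j\right) + (-1)^r\, Q\!\left(\tfrac{t}{m} - b_j\right), \qquad a_j = \tfrac{j}{m}+v_j,\ \ b_j = \tfrac{j}{m}+w_j .
\]
This holds for all $t$ in the residue class of $j$, which is infinite. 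Hence, as polynomials in $x = t/m$, the expression for each $j$ must agree with the one for $j=0$, namely $Q(x) + (-1)^r Q(x-n-1)$. The key elementary fact is the expansion of a shifted $Q$:
\[
n!\,Q(x-a) = x^n + \bigl(\tfrac{n(n+1)}{2} - na\bigr)x^{n-1} + \cdots ,
\]
and more generally its coefficients are determined by power sums of the shift $a$.

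\emph{Case $r$ odd.} The $x^n$ terms cancel on both sides. Comparing $x^{n-1}$ coefficients gives $b_j - a_j = n+1$, i.e.\ $w_j - v_j = n+1$. This is impossible for $0<j<m$ because $v_j, w_j \in [0,n]$. So there is no multiple fiber, for every $n \ge 1$.

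\emph{Case $r$ even.} Comparing $x^{n-1}$ coefficients gives $a_j + b_j = n+1$, i.e.\ $v_j + w_j = n+1 - 2j/m$. Integrality forces $2j/m \in \Z$ with $0<j<m$, so $m = 2$ and $j = 1$.
\begin{itemize}
\item If $n \ge 2$, we also compare the $x^{n-2}$ coefficients. Together with the $x^{n-1}$ comparison, these determine the first two power sums of $\{a_1,b_1\}$ and hence the multiset itself. So $\{a_1,b_1\} = \{0,n+1\}$. This is impossible, since $a_1 = \tfrac12 + v_1$ and $b_1 = \tfrac12 + w_1$ are half-integers.
\item If $n = 1$, the only possible multiplicity is $2$. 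Suppose there were two fibers of multiplicity $2$, over points $p_1 \neq p_2$. Then $D = p_1 + p_2$ is a reduced divisor of degree $2$ with $f^{-1}(D) = 2(E_1+E_2)$. This contradicts Proposition~\ref{prop:O(1/m)}(1), which requires $\deg D$ and $m$ to be coprime.
\end{itemize}
The same coprimality remark confirms that any single multiple fiber in case (2) is as stated.

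The main point to check carefully is the coefficient bookkeeping in the case $n\ge 2$, $r$ even. There one must make sure that the $x^{n-1}$ and $x^{n-2}$ coefficients of $Q(x-a)+Q(x-b)$ really are of the form (constant) $+$ (nonzero multiple of $a+b$), respectively (constant) $+$ (linear combination of $a+b$ and $a^2+b^2$ with nonzero $a^2+b^2$ coefficient). Both follow from expanding $\prod_{i=1}^n (x-a+i)$. Everything else is formal from Theorem~\ref{thm:mainbundles}.
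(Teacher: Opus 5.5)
Your proof is correct. It reaches the contradiction by a different elementary route than the paper, though both rest on Theorem~\ref{thm:mainbundles} and Proposition~\ref{prop:O(1/m)}.

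The paper works with the single polynomial $p(x)=\chi_B(\rR f_*\cO_X\otimes\cO_B(x))$ and uses only its values at the points $j/m$. It combines four facts:
\begin{itemize}
\item integrality, $p(j/m)\in\Z$;
\item strict monotonicity of $p$ for $x\ge 0$, since $p$ has nonnegative coefficients;
\item the bound $0<p(j/m)\le 1$ for $0<j<m$, which needs the degree range $[-n,0]$ from Theorem~\ref{thm:mainbundles}, plus an extra duality argument when $r$ is even to exclude $\Omega^0_j\simeq\Omega^r_j\simeq\cO$;
\item the estimate $p(\tfrac12)>1$ for $n\ge 2$ from Lemma~\ref{lem:strictCY-chi}.
\end{itemize}

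You instead write the whole polynomial on each residue class as $Q(x-a_j)\pm Q(x-b_j)$ and match its top coefficients against the $j=0$ expression $Q(x)\pm Q(x-n-1)$. This buys something concrete in the $r$ even case. There you use only that $\Omega^0_j,\Omega^r_j$ are rank-one locally free sheaves, so $\cO(-v_j),\cO(-w_j)$ with integer $v_j,w_j$, and that the middle $\Omega^i_j$ vanish. You need neither Horrocks nor Koll\'ar's vanishing (the range $[-n,0]$), nor the duality step, nor the inequality $p(\tfrac12)>1$.

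For $n\ge 2$ you could even skip the intermediate deduction $m=2$. The first two power sums already force $\{a_j,b_j\}=\{0,n+1\}$, hence $j/m\in\Z$, which is impossible for $0<j<m$.

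The degree bound enters your argument only for $r$ odd. There it is genuinely needed at least when $n=1$, since $Q(y)-Q(y-2)=2$ is constant and integrality alone would allow $w_j-v_j=2$.

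The paper's version packages the contradiction as a transparent size estimate on values of one explicit Hilbert polynomial determined by $\rR f_*\cO_X$. The final step for $n=1$, $r$ even, excluding two double fibers via the coprimality in Proposition~\ref{prop:O(1/m)}(1), is identical in both.
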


We emphasize that $f$ is not assumed to be flat. We will see in \S \ref{sec:counterexample} that this theorem is optimal, namely that multiplicity $2$ fibers do exist. The rest of this subsection is devoted to the proof of Theorem~\ref{thm:CY}.
For a  projective variety $X$, any class $\alpha \in \rK_0(X)$
and a line bundle $L \in \Pic(X)$ we can consider the Hilbert function
$p_{\alpha, L}(k) = \chi_X(\alpha \cdot L^{\otimes k})$, for $k \in \Z$. This function is always a polynomial in $k$ with rational coefficients; see \cite[(2.5.3)]{EGAIII-1} when $\alpha = [\cF]$ is a class of coherent sheaf -- the general case follows since $p_{\alpha, L}$ is additive in $\alpha$.
Note that following \cite{EGAIII-1}, in the proof of Theorem \ref{thm:CY}, we will still call $p_{\alpha, L}$ the Hilbert polynomial, even when $L$ is not ample. 

\begin{lemma}\label{lem:strictCY-chi}
Under the assumptions of Theorem \ref{thm:CY}, the Hilbert polynomial $p(k) = \chi_B(\rR f_* \cO_X \otimes \cO_B(k))$ is given by
\[
p(x) = 
\frac{(x+1)(x+2)\cdots(x+n) + (-1)^{r} (x-1)(x-2)\cdots(x-n)}{n!} ,
\]
which is a polynomial with nonnegative coefficients. We have $p\left(\frac12\right) > 1$ unless $n = 1$ and $r$ is even.
\end{lemma}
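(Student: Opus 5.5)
The plan is to compute the class $[\rR f_*\cO_X] \in \rK_0(\P^n)$ directly, using the structure of the sheaves $\Omega^i_0 = \rR^i f_*\cO_X$.

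Since $F$ is a Calabi--Yau manifold, $h^i(F,\cO_F)$ equals $1$ for $i=0,r$ and $0$ otherwise. By Proposition~\ref{prop:local freeness of higher direct images} (with $L=\cO_X$), the sheaves $\rR^i f_*\cO_X$ are locally free of rank $h^i(F,\cO_F)$. Hence only $i=0$ and $i=r$ contribute.

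For these two terms:
\begin{itemize}
\item We have $\rR^0 f_*\cO_X = \cO_B$, since $f$ is a fibration.
\item By the Verdier duality relation \eqref{eq:duality} with $j=0$, we get $\rR^r f_*\cO_X \simeq (\rR^0 f_*\cO_X)^\vee\otimes\omega_B = \cO_B(-n-1)$.
\end{itemize}
Note that the duality part of Theorem~\ref{thm:mainbundles} does not need the multiple fiber hypothesis. It is just Verdier duality combined with $\omega_X\simeq\cO_X$, together with the splitting $\rR f_*\cO_X \simeq \bigoplus_i \rR^i f_*\cO_X[-i]$ from Koll\'ar used in the proof of Proposition~\ref{prop:local freeness of higher direct images}.

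Therefore
\[
p(k) = \chi_{\P^n}(\cO(k)) + (-1)^r\chi_{\P^n}(\cO(k-n-1)) = \binom{k+n}{n} + (-1)^r\binom{k-1}{n}.
\]
Writing $\binom{k-1}{n} = (k-1)\cdots(k-n)/n!$ gives exactly the stated formula.

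For nonnegativity of the coefficients, substitute $x\mapsto -x$. This gives $(x-1)\cdots(x-n) = (-1)^n(-x+1)\cdots(-x+n)$, so with $q(x) = (x+1)\cdots(x+n)$ we have $p(x) = \bigl(q(x) + (-1)^{r+n} q(-x)\bigr)/n!$. Thus $p$ is either $2/n!$ times the even part of $q$ or $2/n!$ times the odd part of $q$. Since $q$ has positive coefficients, $p$ has nonnegative coefficients.

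For the inequality, evaluate at $x=\tfrac12$:
\[
p(\tfrac12) = \frac{\prod_{i=1}^n (i+\tfrac12) + (-1)^{r}\prod_{i=1}^n(\tfrac12-i)}{n!}.
\]
The second product has absolute value $\prod_{i=1}^n (i-\tfrac12)$, which is much smaller than the first. So
\[
p(\tfrac12) \ge \frac{\prod (i+\tfrac12) - \prod (i-\tfrac12)}{n!}.
\]
The first quotient is $\prod (1+\tfrac1{2i})$ and the second is $\prod(1-\tfrac1{2i})$, and we need their difference to exceed $1$.

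The cases are as follows:
\begin{itemize}
\item For $n=1$, we get $p(\tfrac12) = \tfrac32 + (-1)^{r+1}\tfrac12$, which is $2$ if $r$ is odd and $1$ if $r$ is even. This is exactly the exceptional case.
\item For $n=2$, the products are $\tfrac{15}{8}$ and $\tfrac38$, giving a difference of $\tfrac{12}{8} > 1$.
\item For $n\ge 2$ in general, $\prod(1+\tfrac1{2i})$ is increasing in $n$ and $\prod(1-\tfrac1{2i})$ is decreasing in $n$. So the difference is increasing in $n$ and stays $\ge \tfrac32 > 1$.
\end{itemize}
The main care point is this monotonicity and sign bookkeeping, which is routine. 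The only conceptual step is justifying $\rR^r f_*\cO_X\simeq\omega_B$ and the vanishing of the middle terms without assuming flatness, which Proposition~\ref{prop:local freeness of higher direct images} provides.
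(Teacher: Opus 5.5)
Your proposal is correct and follows the paper's proof. It uses local freeness and a rank count to kill the middle $\rR^i f_*\cO_X$, together with $\rR^0 f_*\cO_X=\cO_B$ and $\rR^r f_*\cO_X\simeq\omega_B$ by duality, followed by an elementary evaluation. The only cosmetic difference is that you bound $p(\tfrac12)$ by monotonicity of the products in $n$ (and check nonnegativity via even/odd parts), whereas the paper uses the identity $\chi_n(\tfrac12)-\chi_n(-\tfrac12)=\chi_{n-1}(\tfrac12)>1$, where $\chi_n(x)=\chi_{\P^n}(\cO(x))$.
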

\begin{proof}
Since the general fiber $F$ is Calabi--Yau,
the first part of the proof of
Theorem~\ref{thm:mainbundles} (that does not use the existence of codimension one multiple fibers)
shows $\rR^if_* \cO_X = 0$ for $0 < i < r$. As we assume $f$ to be a fibration, $f_* \cO_X = \cO_B$, so $\rR^r f_* \cO_X = \omega_B$ follows for the same reason.
Let us consider the Hilbert polynomial $\chi_n(x) := \chi_{\P^n}(\cO(x)) = \frac{(x+1)\cdots (x+n)}{n!}$.
By the projection formula, we can compute $\rR^i f_*\cO_X(k)$ and  deduce
\[
p(k) = \chi_{\P^n}(\cO_{\P^n}(k)) + (-1)^r \chi_{\P^n}(\omega_{\P^n}(k)) = \chi_{n}(k) + (-1)^{n+r} \chi_{n}(-k)
\]
which gives the desired expression for $p(x)$.
It is clear from the formula that $p(x)$ has nonnegative coefficients. 
For $n > 1$ we can bound $p\left(\frac12\right)$ as follows:
\[
p\left(\tfrac12\right) = 
\chi_{n}\left(\tfrac12\right) \pm \chi_{n}\left(-\tfrac12\right) \ge 
\chi_{n}\left(\tfrac12\right) - \chi_{n}\left(-\tfrac12\right)
= \chi_{{n-1}}\left(\tfrac12\right)
> \chi_{{n-1}}(0) = 1.
\]
If $n = 1$, then $p\left(\frac12\right)$ equals $2$ when $r$ is odd (and it equals $1$ when $r$ is even).
\end{proof}

\begin{proof}[Proof of Theorem \ref{thm:CY}]
Assume that $f$ has a codimension one multiple fiber of multiplicity $m > 1$.
Consider another Hilbert polynomial $h(j) := \chi_B \big( \rR f_* (\cO_X(j/m)) \big) = \chi_X \big( \cO_X(j/m) \big)$. Since $\cO_X(j/m) = \cO_X(1/m)^{\otimes j}$, this is also polynomial function in $j$ with rational coefficients. Notice $h(mk) = p(k)$ for every $k \in \Z$ by definition. It follows that $p(x) = h(mx)$ as polynomials, which implies that for all $j \in \Z$
\[
p\big( \tfrac{j}{m} \big) = h(j) = \chi_X( \cO_X(j/m)) \in \Z.
\]
This gives a very strong integrality constraint on $p(x)$.

Let us first assume $n \ge 2$.
Since $p(x)$ is a nonconstant polynomial with nonnegative coefficients it is strictly increasing for $x \ge 0$ and we have
\begin{equation}\label{eq:g-chain}
0 \le p(0) < p\left(\tfrac1{m}\right)
< p\left(\tfrac2{m}\right) < \ldots < 
p\left(\tfrac{m-1}{m}\right) < p(1) .
\end{equation}
On the other hand, for every $0 < j < m$
by  Theorem~\ref{thm:mainbundles}, 
we have $\Omega_j^i = 0$ for $0 < i < r$,
the line bundles $\Omega_j^0$ and  $\Omega_j^r$ have degrees in the interval $[-n, 0]$
hence
\begin{equation}\label{eq:value-1}
0 < p\big( \tfrac{j}{m} \big) = \chi(\cO_X(j/m)) = h^0(\P^n, \Omega^0_j) + (-1)^r h^0(\P^n, \Omega^r_j) \le 1.
\end{equation}
Here the last inequality, when $r$ is even, is justified as follows: we can not have $\Omega_j^0 \simeq \Omega_j^r \simeq \cO_{\P^n}$ because otherwise by duality \eqref{eq:duality} we would get
 $\Omega_{m-j}^0 \simeq \Omega_{m-j}^r \simeq \cO_{\P^n}(-n)$ and $p(\tfrac{m-j}{m}) = 0$ which is impossible.

It follows from \eqref{eq:g-chain} 
and \eqref{eq:value-1}
that $m = 2$ and $p\left(\frac{1}{2}\right) = 1$
however by Lemma \ref{lem:strictCY-chi} this is not possible as we assumed $n \ge 2$. 

If $n = 1$, and $r$ is odd, then $p(x) = 2$
in which case there are no multiple fibers possible by \eqref{eq:value-1}.
Finally, if $n = 1$ and $r$ is even then $p(x) = 2x$ and $p\left(\frac{1}{2}\right) = 1$, so we can have $m = 2$.
In that case, if there exist two such multiple fibers over distinct points $b, b' \in \P^1$, then this violates Proposition~\ref{prop:O(1/m)} applied to a degree $2$ reduced divisor $D = b + b'$. Hence there exists at most one multiple fiber with multiplicity $2$.
\end{proof}

\begin{remark}\label{rem:primitivity-CY}
With the proof of Theorem \ref{thm:CY}, one can also investigate the primitivity
of the embedding
 $f^*(\Pic(B)) \subset \Pic(X)$
for a Calabi--Yau fibration. In addition to assumptions of Theorem \ref{thm:CY}, assume that the general fiber $F$ of $f$ is simply-connected.
Then arguing like in Remark \ref{rem:primitivity}, one can prove that in case (1), this embedding is primitive and in case (2) the index of $f^*(\Pic(B))$ in its saturation is at most two.
\end{remark}

\begin{remark}\label{ex:K-triv-examples}
    Both K-triviality and simply-connectedness are essential assumptions in Theorem \ref{thm:CY}.
    On the one hand, there exist simply connected elliptic surfaces that have multiple fibers but are not K-trivial (e.g. Halphen pencils or Dolgachev surfaces).
    
    On the other hand there exist 
    non-simply connected Calabi--Yau threefolds $X$ 
    with a fibration $f\colon X \to \P^1$ with general fiber a K3 surface, with more than one multiple fiber. Such varieties can be found among so-called Calabi--Yau threefolds of type K \cite[\S 2K]{OguisoSakurai}. 
    For a specific example, let us take $Y = S \times E$ where $S$ is a K3 surface which admits an Enriques involution and $E$ an elliptic curve.
    Consider the 
    action of $G = \Z/2$ on $Y$
    given by the Enriques involution on $S$ and by $-1$ on $E$.
    Take $X = S \times E / G$;
    it is a smooth Calabi--Yau threefold.
    Let $f\colon X \to E/G = \P^1$ be induced by the projection. 
    Then smooth fibers of $f$ are isomorphic to $S$ and there are $4$ double Enriques fibers over the fixed points of $G$ on $E$.
\end{remark}

In the next subsection we recall and generalize the fact that $X$ containing an Enriques surface already implies the existence of a Calabi--Yau fibration on $X$ with a multiple fiber \cite{Borisov-Nuer16}.

\subsection{Simply-connected K-trivial counterexamples} \label{sec:counterexample}
Theorem~\ref{thm:CY} did not rule out a single numerical possibility: the case when $n = 1$ and $r = 2d$ is even, in which case there may be one multiple fiber with multiplicity $2$. In this section, we explain how to construct examples realizing this case, thus showing that the theorem is formulated optimally.

\begin{definition}
An \emph{Enriques--Calabi--Yau manifold} is a smooth projective variety $V$ of even dimension $2d$ such that its universal cover $W$ is a Calabi--Yau manifold but  $\omega_V \not \simeq \cO_V$.
\end{definition} 

An Enriques surface is an Enriques--Calabi--Yau manifold of dimension $2$. More generally, the Hilbert scheme of $d$ points on an Enriques surface is an Enriques--Calabi--Yau manifold of dimension $2d$ \cite[Theorem 3.1]{Oguiso-Schroer11}. Enriques--Calabi--Yau manifolds are also called Enriques varieties of index two, see
\cite[Definition 2.1, Proposition 2.1]{BNWS-Enriques}.

\begin{proposition} \label{prop:counterexample}
Let $X$ be a simply-connected and smooth projective K-trivial variety of dimension $2d+1$ containing an Enriques--Calabi--Yau manifold $V \subset X$ of dimension $2d$. Then $X$ admits a fibration $f \colon X \to B = \P^1$ and with exactly one multiple fiber $2V$ and a Calabi--Yau general fiber $F$.
\end{proposition}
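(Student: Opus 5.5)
The plan is to follow Borisov--Nuer \cite[\S8]{Borisov-Nuer16}: produce a pencil $|\cO_X(2V)|$ containing $2V$, and show that it is base point free with connected fibers.

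\textbf{Step 1: the normal bundle.} By adjunction and $\omega_X\simeq\cO_X$ we have $\cN_{V/X}\simeq\omega_V$. Since the universal cover $W\to V$ is Calabi--Yau with $\omega_W\simeq\cO_W$ but $\omega_V\not\simeq\cO_V$, the bundle $\omega_V$ is a nontrivial torsion line bundle. In fact it is $2$-torsion: $\pi_1(V)$ has order $2$, since $\chi(\cO_W)=2=\deg(W/V)\cdot\chi(\cO_V)$ and $\chi(\cO_V)=1$. The latter holds because $h^i(\cO_V)=0$ for $0<i<2d$, and $h^{2d}(\cO_V)=h^0(\omega_V)=0$ since $\omega_V$ is nontrivial torsion. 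So $\cO_V(V)\simeq\omega_V$ is nontrivial but $\cO_V(2V)\simeq\cO_V$.

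\textbf{Step 2: sections of $\cO_X(2V)$.} Consider
\[0\to\cO_X(V)\to\cO_X(2V)\to\cO_V(2V)\simeq\cO_V\to0.\]
I need $h^1(\cO_X(V))=0$. This comes from
\[0\to\cO_X\to\cO_X(V)\to\omega_V\to0,\]
using $h^1(\cO_X)=0$ (simple connectivity and Hodge symmetry) and $h^1(\omega_V)=h^{2d-1}(\cO_V)=0$. Hence $h^0(\cO_X(V))=1$, and $\rH^0(\cO_X(2V))\to\rH^0(\cO_V)$ is surjective. The resulting pencil contains $2V$ together with a section not vanishing on $V$. The same sequence shows $h^0(\cO_X(2V))=2$, because $h^0(\omega_V)=0$.

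\textbf{Step 3: the morphism and its fibers.} The pencil has no base points on $V$, and any base locus is contained in $2V$, so it is base point free. This gives $f\colon X\to\P^1$ with $f^*\cO(1)=\cO_X(2V)$ and fiber $2V$ over one point. Connectedness of fibers: in the Stein factorization $X\to C\to\P^1$, we have $C\simeq\P^1$ since $h^1(\cO_X)=0$. If the finite part had degree $e>1$, then $\cO_X(2V)$ would be divisible by $e$ in a way incompatible with $V$ being a single reduced irreducible component. Alternatively, $h^0(\cO_X(2V))=2$ forces $f_*\cO_X=\cO_{\P^1}$ directly, since otherwise $h^0(f^*\cO(1))\ge h^0(\cO_C(\deg))>2$. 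I expect this Stein-factorization step to be the main point needing care.

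\textbf{Step 4: the general fiber and uniqueness.} The general fiber $F$ is smooth (Bertini, char $0$) and K-trivial by adjunction. To see it is Calabi--Yau, deform it to $V$: a fiber near $2V$ is a smooth deformation of the double structure, and its étale double cover degenerates to $W$. Concretely, by Step 1 the line bundle $\cO_X(V)$ restricted to a neighborhood gives the cyclic cover, and $F$ is a limit of $W$ in the sense that $\chi(\cO_F)=\chi(\cO_{2V})=2$ with $h^i(\cO_F)$ upper semicontinuous and bounded by those of $W$. Finally, by Theorem \ref{thm:CY}(2) there is at most one double fiber and no higher multiplicities, so $2V$ is the unique multiple fiber.
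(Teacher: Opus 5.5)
Your Steps 1--3 follow the paper: the same extensions $0\to\cO_X\to\cO_X(V)\to\omega_V\to0$ and $0\to\cO_X(V)\to\cO_X(2V)\to\cO_V\to0$, and the same base-point-freeness argument. Your second argument for connected fibers is correct but differs from the paper's. With $C\simeq\P^1$ and $h\colon C\to\P^1$ of degree $e$, you get $2=h^0(X,\cO_X(2V))=h^0(\P^1,\cO(e))=e+1$, so $e=1$. The paper instead obtains connectedness as a by-product of $h^0(\cO_F)\le h^0(\cO_{2V})=1$. Invoking Theorem~\ref{thm:CY}(2) for uniqueness of the double fiber is fine once $F$ is known to be Calabi--Yau.

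The problem is Step 4, which carries the real content of the proposition and as written is not an argument. The picture you describe is inaccurate. The \'etale double cover of a neighbourhood $f^{-1}(U)$ of $2V$ defined by the $2$-torsion bundle $\cO_X(V)|_{f^{-1}(U)}$ is the trivial cover $F\sqcup F$ over a nearby fiber (since $\cO_X(V)|_F\simeq\cO_F$), and it is $2W$ over $2V$. So no \'etale double cover of $F$ degenerates to $W$, and ``$F$ is a limit of $W$'' has the direction reversed. The inequality you actually need, $h^i(\cO_F)\le h^i(\cO_W)$, is asserted but not derived. The missing line is the extension
\[
0\to\cO_V(-V)\simeq\omega_V\to\cO_{2V}\to\cO_V\to0,
\]
which uses $\omega_V^{\otimes 2}\simeq\cO_V$. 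Together with your Step~1 vanishings it gives $h^i(\cO_{2V})\le h^i(\cO_V)+h^i(\omega_V)=h^i(\cO_W)$. Hence $h^i(\cO_{2V})=0$ for $0<i<2d$ and $h^0(\cO_{2V})=1$. Upper semicontinuity for the flat morphism $f$ then yields $h^i(\cO_F)=0$ for $0<i<2d$ and $h^0(\cO_F)=1$; this is exactly the paper's proof.

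Your degeneration idea can also be repaired directly. Base change $f$ near $f(V)$ along $t\mapsto t^2$ and normalize; as in the proof of Proposition~\ref{prop:O(1/m)}, the normalization is \'etale over $X$ near $V$. The new central fiber is then the reduced smooth $W$, and the nearby fibers are copies of $F$, so $F$ is a smooth deformation of $W$ and hence Calabi--Yau.
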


We note that there are many simply-connected Calabi--Yau threefolds $X$ containing Enriques surfaces \cite[\S3]{Borisov-Nuer16}, \cite[\S4]{Suzuki22} (the general fiber $F$ in Proposition~\ref{prop:counterexample} in this case will be a K3 surface), showing that the exceptional numerical possibility in Theorem~\ref{thm:CY} indeed arises. Let us prove this proposition through the rest of this subsection.

\medskip

Write $\pi \colon W \to V$ for the universal cover of $V$.
Since $\deg \pi \cdot \chi (\cO_V) = \chi(\cO_W) = 2$, we have $\deg \pi = 2$ and $\chi(\cO_V) = 1$. Because $\pi^* \omega_V = \omega_W \simeq \cO_W$, we conclude from the projection formula that $\omega_V^{\otimes 2} \simeq \cO_V$ and $\pi_*\cO_W \simeq \cO_V \oplus \omega_V$.

\begin{lemma}\label{lem:cohomology of V}
    Let $V$ be an Enriques--Calabi--Yau manifold of dimension $2d$. Then 
    \begin{renumerate}
    \item $h^0(V, \cO_V) = 1$ \ and \ $h^i (V, \cO_V) = 0$ for $i > 0$.
    \item $h^{2d}(V, \omega_V) = 1$ \ and \ $h^i(V, \omega_V) = 0$ for $i < 2d$.
    \end{renumerate}
\end{lemma}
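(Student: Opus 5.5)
We plan to derive everything from the double cover $\pi\colon W \to V$ and the decomposition $\pi_*\cO_W \simeq \cO_V \oplus \omega_V$ recorded just before the statement. Since $\pi$ is finite, $\rH^i(W, \cO_W) \simeq \rH^i(V, \cO_V) \oplus \rH^i(V, \omega_V)$ for every $i$. Because $W$ is a Calabi--Yau manifold of dimension $2d$, we have $h^i(W,\cO_W) = 1$ for $i \in \{0, 2d\}$ and $h^i(W,\cO_W)=0$ for $0<i<2d$. Hence $\rH^i(V,\cO_V) = \rH^i(V,\omega_V) = 0$ for $0<i<2d$, and in degrees $0$ and $2d$ the single dimension of $\rH^i(W,\cO_W)$ is split between the two summands.

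It remains to determine where that dimension lands. In degree $0$, $V$ is connected and projective, so $h^0(V,\cO_V)=1$, which forces $h^0(V,\omega_V)=0$. This is consistent with $\omega_V$ being a nontrivial $2$-torsion line bundle, since a nonzero section of a torsion line bundle trivializes it. In degree $2d$, Serre duality on $V$ gives $h^{2d}(V,\omega_V) = h^0(V,\cO_V) = 1$ and $h^{2d}(V,\cO_V) = h^0(V,\omega_V) = 0$.

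Combining these gives (i) and (ii). As a consistency check, $\chi(\cO_V) = 1$, matching the computation preceding the lemma. No step is a real obstacle. The only point needing care is the degree-$0$ case $h^0(V,\omega_V)=0$, and it follows immediately from $\omega_V \not\simeq \cO_V$ together with $h^0(V,\cO_V) = 1$.
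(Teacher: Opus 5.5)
Your proposal is correct and is essentially the paper's proof. The paper likewise uses $\rH^i(W,\cO_W)=\rH^i(V,\cO_V)\oplus\rH^i(V,\omega_V)$ to kill the middle degrees, and then settles degrees $0$ and $2d$ via Serre duality. You simply spell out the degree-$0$ bookkeeping that the paper leaves implicit.
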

\begin{proof}
    Consider the equality
    \[ \rH^i(W, \cO_W) = \rH^i(V, \pi_*\cO_W) = \rH^i(V, \cO_V) \oplus \rH^i(V, \omega_V) .\]
    For $1 \le i \le 2d-1$, we have $h^i(W, \cO_W) = 0$ and hence $h^i(V, \cO_V) = h^i(V, \omega_V) = 0$. The computations of $h^0$ and $h^{2d}$  follow using the Serre duality.
\end{proof}

\begin{lemma} \label{lem:2V is base point free}
    $h^0(X, \cO_X(2V)) = 2$ and the complete linear system $|2V|$ is base-point-free.
\end{lemma}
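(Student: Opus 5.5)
We compute the normal bundle of $V$ and then use the standard restriction sequence. By adjunction, $\omega_V \simeq (\omega_X \otimes \cO_X(V))|_V \simeq \cO_X(V)|_V$, since $X$ is K-trivial. Hence $N := \cO_X(V)|_V \simeq \omega_V$ is a nontrivial $2$-torsion line bundle, and $\cO_X(2V)|_V \simeq \omega_V^{\otimes 2} \simeq \cO_V$.

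\textbf{Computing $h^0(X,\cO_X(V))$.} Consider the sequences
\[
0 \to \cO_X((k-1)V) \to \cO_X(kV) \to \cO_V(kV) \to 0, \qquad k = 1, 2 .
\]
For $k=1$ we have $\cO_V(V) \simeq \omega_V$, and $h^0(V,\omega_V) = 0$ by Lemma~\ref{lem:cohomology of V}(ii) (note $2d \ge 2 > 0$). Hence $h^0(X, \cO_X(V)) = h^0(X, \cO_X) = 1$.

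Next we need $h^1(X,\cO_X(V))$. From the same sequence, $h^1(X,\cO_X) \to h^1(X,\cO_X(V)) \to h^1(V,\omega_V)$. Here $h^1(X,\cO_X)=0$ because $X$ is simply connected, so $h^1 = h^{1,0} = 0$. Also $h^1(V,\omega_V) = 0$ by Lemma~\ref{lem:cohomology of V}(ii), provided $1 < 2d$, which holds since $d \ge 1$. Therefore $h^1(X,\cO_X(V)) = 0$.

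\textbf{Computing $h^0(X,\cO_X(2V))$.} For $k=2$ the sequence gives
\[
0 \to \rH^0(\cO_X(V)) \to \rH^0(\cO_X(2V)) \to \rH^0(V, \cO_V) \to \rH^1(\cO_X(V)) = 0 .
\]
Since $h^0(V,\cO_V) = 1$, this yields $h^0(X,\cO_X(2V)) = 1 + 1 = 2$.

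\textbf{Base-point-freeness.} Let $s_0$ be the section of $\cO_X(2V)$ defining the divisor $2V$. The surjectivity above gives a section $s$ whose restriction to $V$ is a nowhere-vanishing section of $\cO_V(2V) \simeq \cO_V$. Consequently the base locus $\mathrm{Bs}|2V| \subset \{s_0 = 0\} = V$ is disjoint from $V$, i.e.\ it is empty.

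The only delicate point is the identification $\cO_X(2V)|_V \simeq \cO_V$. This holds because $\omega_V$ is $2$-torsion, which was established above via $\pi_*\cO_W \simeq \cO_V \oplus \omega_V$. Beyond that, the argument uses only the vanishings from Lemma~\ref{lem:cohomology of V} and $h^1(\cO_X) = 0$.
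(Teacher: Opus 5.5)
Your proposal is correct and follows essentially the same route as the paper: adjunction to get $\cO_V(V) \simeq \omega_V$ and $\cO_V(2V) \simeq \cO_V$, then the two restriction sequences together with Lemma~\ref{lem:cohomology of V} to obtain $h^0(\cO_X(V)) = 1$, $h^1(\cO_X(V)) = 0$, $h^0(\cO_X(2V)) = 2$, and surjectivity onto $\rH^0(V, \cO_V)$ for base-point-freeness. You also make explicit the input $h^1(X, \cO_X) = 0$ (from simple connectedness), which the paper uses only implicitly.
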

\begin{proof}
    Our argument follows \cite[Proposition~8.1]{Borisov-Nuer16}. The adjunction formula shows $\omega_V = \cO_X(V)|_V = \cO_V(V)$. Since $\omega_V^{\otimes 2} \simeq \cO_V$, we have $\cO_V(2V) \simeq \cO_V$. Now take the exact sequence
    \[ 0 \to \cO_X \to \cO_X(V) \to \cO_V(V) = \omega_V \to 0 .\]
    Applying Lemma~\ref{lem:cohomology of V}, we have $h^0(X, \cO_X(V)) = 1$ and $h^1(X, \cO_X(V)) = 0$. Take another exact sequence
    \[ 0 \to \cO_X(V) \to \cO_X(2V) \to \cO_V(2V) = \cO_V \to 0 .\]
    We can then conclude $h^0(X, \cO_X(2V)) = 2$.
    
    The base locus of $|2V|$ must be contained in $V$. In the above we have showed the surjectivity of $H^0(X, \cO_X(2V)) \to H^0(V, \cO_V(2V)) = H^0 (V, \cO_V) = \CC$, so $|2V|$ has no base locus in $V$ as well.
\end{proof}

\begin{proof}[Proof of Proposition~\ref{prop:counterexample}]
    Let $f \colon X \to B = \P^1$ be the morphism associated to the base-point-free linear system $|2V|$ in Lemma~\ref{lem:2V is base point free}. We compute the cohomology of its general fiber $F$. Take the short exact sequence
    \[ 0 \to \cO_V(-V) = \omega_V \to \cO_{2V} \to \cO_V \to 0 .\]
    Lemma~\ref{lem:cohomology of V} implies $h^i(2V, \cO_{2V}) = 0$ for all $1 \le i \le 2d-1$, so by upper-semicontinuity, we have $h^i(F, \cO_F) = 0$ for all $1 \le i \le 2d-1$. Similarly, we have $h^0 (2V, \cO_{2V}) = 1$ and thus $h^0 (F, \cO_F) = 1$. This concludes that $f$ has connected fibers and $F$ is a Calabi--Yau manifold.
\end{proof}

\section{Generalizations to singular total spaces}\label{sec:singular}

Let us discuss some possible generalizations of Theorems~\ref{thm:HK} and \ref{thm:CY}. The following simple example already shows the subtlety to this direction.

\begin{example}\label{ex:KummerK3}
    If $E$ and $F$ are elliptic curves, then the second projection 
    \[
    f\colon S = (E \times F)/\langle -1 \rangle \to \P^1 \simeq F/\langle -1 \rangle
    \]
    has four multiple fibers, so one cannot expect similar results when $X$ has canonical singularities. Let us analyze this example a bit further.
    Let $p \in \P^1$ be an image of any $2$-torsion point in $F$. We have $f^{-1}(p) = 2 L$, where $L \simeq \P^1$ is a Weil divisor on $S$ which is not Cartier. One can compute directly that for the reflexive powers of $L$ we have
    \[
    \chi_S(\cO(L)^{[k]}) = \begin{cases}
    2 & \text{ $k$ even}\\
    1 & \text{ $k$ odd}\\
    \end{cases}
    \]
    hence this Hilbert function is not a polynomial; see \cite[Theorem 9.1]{Reid-canonical} for the general Riemann--Roch theorem for Weil divisors on projective surfaces with du Val singularities. This explains how the arguments of Example \ref{ex:K3} and the proofs of Theorems \ref{thm:HK} and \ref{thm:CY} do not apply.
\end{example}

On the other hand, it is interesting that sometimes singularities do not affect the codimension $1$ multiple fibers statement.
The following result should be well-known to experts, but we included it for convenience of the readers.

\begin{proposition} \label{prop:elliptic CY}
    Let $X$ be a projective variety with terminal singularities such that $K_X$ is base-point-free and Cartier. Let $f \colon X \to B$ be a  fibration whose general fiber $F$ is an elliptic curve. Then $f$ has no codimension $1$ multiple fibers.
\end{proposition}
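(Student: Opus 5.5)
The plan is to reduce to a local statement about elliptic surfaces over a disc and to use that a base-point-free line bundle of degree $0$ on a connected curve is trivial. Suppose $f^{-1}(D) = mE$ with $m > 1$ for a prime divisor $D \subset B$. Since the question is local along $D$, I would shrink $B$ to a neighbourhood of a general point $b \in D$ where $B$ and $D$ are smooth. Terminal singularities have codimension at least $3$, so $X$ is smooth in codimension $2$.

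Next I cut down to a surface. Choose a general smooth curve $C \subset B$ through $b$, meeting $D$ transversally, obtained as a complete intersection of general very ample divisors. By Bertini and the codimension bound, $S = f^{-1}(C)$ is a smooth surface near $f^{-1}(b)$, and $g = f|_S \colon S \to C$ is an elliptic fibration. Its fiber over $b$ is $g^*b = mE|_S$, so it is a multiple fiber of multiplicity divisible by $m$; I replace $m$ by the actual multiplicity $m' \ge m > 1$ and write $g^*b = m'E_0$ with $E_0$ indivisible. By adjunction $K_S = (K_X + f^*\Lambda)|_S$ for a divisor $\Lambda$ on $B$, and $\Lambda$ is trivial near $b$. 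Hence, over a small disc $\Delta \ni b$, the bundle $\omega_S$ is the restriction of $\omega_X$. It is therefore still Cartier, and it is base-point-free along $E_0$.

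Now I invoke Kodaira's theory of multiple fibers over a disc. The fiber is of type ${}_{m'}\mathrm{I}_k$, with $E_0$ a smooth elliptic curve or a cycle of rational curves, hence a connected reduced curve of arithmetic genus $1$. The canonical bundle formula gives $\omega_{S}|_{g^{-1}\Delta} \simeq \cO((m'-1)E_0)$. Moreover $\cO_{E_0}(E_0)$ is a torsion line bundle of order exactly $m'$; this follows from $\cO_{E_0}(m'E_0) = \cO_{E_0}(g^*b)$ being trivial, together with indivisibility of the fiber. Consequently $\omega_S|_{E_0} \simeq \cO_{E_0}(E_0)^{\otimes(m'-1)}$ is a nontrivial line bundle of degree $0$ on every component of $E_0$.

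Finally I derive the contradiction. On the other hand, $\omega_S|_{E_0}$ is base-point-free, because it is the restriction of the base-point-free $\omega_X$. A base-point-free line bundle of degree $0$ on each component of a connected reduced projective curve is trivial. Indeed, a section not vanishing at a chosen point vanishes nowhere on the component through that point, since the degree is $0$, and by connectedness it vanishes nowhere on $E_0$. This contradicts the nontriviality above, so no codimension $1$ multiple fiber exists.

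The main obstacle is the surface reduction. One must make sure that $S$ is smooth along the entire fiber $g^{-1}(b)$, not just generically. This relies on $X$ being smooth in codimension $2$, combined with a general choice of $b \in D$ and of $C$, so that $g^{-1}(b)$ avoids the codimension-$\ge 3$ singular locus. One must also make sure that Kodaira's order-$m'$ statement for $\cO_{E_0}(E_0)$ applies in this local analytic setting. If the smoothness of $S$ along the whole fiber fails, I would instead take a resolution of $S$ that is an isomorphism over the generic point. I would then argue with the relatively minimal model over $\Delta$, since a multiple fiber contains no $(-1)$-curves contracted by $\omega$.
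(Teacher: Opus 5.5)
Your argument is the paper's argument in local form. You cut down to an elliptic surface, use that $\omega_S$ is the restriction of the base-point-free $\omega_X$, and play the nontrivial torsion bundle $\omega_S|_{E_0}\simeq\cO_{E_0}((m'-1)E_0)$ against base-point-freeness. The paper works globally, using the canonical bundle formula and the fact that multiple fibers are then fixed components of $|K_S|$; your ``base-point-free of degree $0$ on a connected reduced curve implies trivial'' is an equivalent conclusion. State in the main line, not only in the fallback, that $\omega_S$ is nef on the fiber components, hence there are no $(-1)$-curves. Relative minimality is exactly what Kodaira's ${}_{m'}\mathrm{I}_k$ classification and the order-$m'$ lemma require.

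The step not justified as written is the smoothness of $S=f^{-1}(C)$ along all of $f^{-1}(b)$ when $C$ is forced through a prescribed point $b$. Bertini says nothing on $f^{-1}(b)$, which is the base locus of the pulled-back system of divisors through $b$. Avoiding $X_{\mathrm{sing}}$ is also not enough. Near any $x$ in the fiber one has $f^*t=u\,e^m$ with $m\ge 2$, where $t$ defines $D$. Hence $df_x$ lands in $T_bD$, and since $C$ is transverse to $D$, $S$ is smooth at $x$ only if $df_x$ maps \emph{onto} $T_bD$. This does hold for general $b\in D$: apply generic smoothness to each piece of a stratification of $f^{-1}(D)_{\mathrm{red}}\cap X_{\mathrm{sm}}$ into smooth locally closed subvarieties. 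But it has to be argued.

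The paper sidesteps this by not prescribing $b$. It takes $C=H_1\cap\dots\cap H_{n-1}$ for general very ample $H_i$. Then $S$ misses $X_{\mathrm{sing}}$, which has dimension at most $n-2$, and is smooth by Bertini for the base-point-free systems $f^*|H_i|$. Moreover $C$ meets $D$ transversally at general points, so the fibers there are still $mE|_S$. With either fix, your fallback through resolutions and relatively minimal models is unnecessary. That fallback would also be problematic, since base-point-freeness of the canonical bundle need not survive passing to a resolution.
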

\begin{proof}
    Write $n = \dim B$ for simplicity. Take $n-1$ sufficiently ample and general divisors $H_1, \cdots, H_{n-1} \subset B$, and consider the intersections $C = \bigcap_{i=1}^{n-1} H_i \subset B$ and $S = \bigcap_{i=1}^{n-1} f^{-1}(H_i) \subset X$. Then $C$ is a smooth curve since $B$ is normal, and $S$ is a smooth surface since $X$ is terminal. Adjunction formula computes
    \[ K_S = (K_X + \sum_{i=1}^{n-1} f^*H_i)|_S .\]
    This shows $K_S$ is base-point-free and hence nef. In particular, $f_C : S \to C$ is a minimal elliptic surface.

    It now suffices to show that $f_C$ has no multiple fibers. By the canonical bundle formula for minimal elliptic surfaces, multiple fibers of $f_C$ arise as fixed components of $|K_S|$ (see Lemma~\ref{lem:canonical bundle formula}). Since $|K_S|$ was base-point-free, $f_C$ has no multiple fibers.
\end{proof}

\begin{lemma} \label{lem:canonical bundle formula}
    Let $f\colon S \to C$ be a minimal elliptic surface (possibly without any sections). If $K_S$ is effective, then the multiple fibers of $f$ arise as fixed components of $|K_S|$.
\end{lemma}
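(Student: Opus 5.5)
The plan is to use Kodaira's canonical bundle formula for minimal elliptic surfaces, which holds also without sections. For a relatively minimal elliptic fibration $f\colon S \to C$ with multiple fibers $m_1F_1, \ldots, m_kF_k$ (here $F_i$ is the reduced, or more precisely the indivisible, fiber divisor with $m_iF_i \equiv$ fiber), it reads
\[
K_S \sim f^*(K_C + \mathbb{L}) + \sum_{i=1}^k (m_i - 1) F_i ,
\]
where $\mathbb{L}$ is a line bundle on $C$ of degree $\chi(\cO_S)$, dual to $\rR^1 f_*\cO_S$ modulo torsion. I would quote this formula, for instance from Barth--Hulek--Peters--Van de Ven, Theorem V.12.1, or from Kodaira's original work.

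The key step is to show that each $(m_i - 1)F_i$ lies in the fixed part of $|K_S|$. Set $M = K_C + \mathbb{L}$, so that $K_S = f^*M + \sum (m_i-1)F_i$. I would then show that
\[
f_*\cO_S\bigl(f^*M + \textstyle\sum (m_i-1)F_i\bigr) = \cO_C(M).
\]
By the projection formula, it suffices to prove $f_*\cO_S(\sum (m_i-1)F_i) = \cO_C$. This is a local statement near each multiple fiber. Since $0 \le (m_i-1)F_i$, we get $\cO_C \subseteq f_*\cO_S((m_i-1)F_i)$. Conversely, a local section of $f_*\cO_S((m_i-1)F_i)$ is a rational function $g$ on $f^{-1}(U)$, regular off $F_i$, with $\mathrm{div}(g) + (m_i-1)F_i \ge 0$. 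Such a $g$ is constant on fibers, so $g = f^*h$ for a rational function $h$ on $U$. The pullback of $\mathrm{div}(h)$ has multiplicity along $F_i$ divisible by $m_i$, and that multiplicity is at least $-(m_i-1)$. Hence $h$ is regular. Here I use that the fiber equals $m_iF_i$ with $F_i$ irreducible or, more generally, that all components have multiplicity divisible by $m_i$; if $F_i$ is reducible I would argue component-wise with the same divisibility.

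It follows that $H^0(S, K_S) = H^0(C, M)$, with every section pulled back from $C$ and multiplied by the fixed section of $\cO_S(\sum(m_i-1)F_i)$. Since $K_S$ is effective, $H^0(C,M) \ne 0$, and every member of $|K_S|$ has the form $f^*D + \sum(m_i-1)F_i$ with $D \in |M|$. So each $(m_i-1)F_i$, which is nonzero since $m_i > 1$, is a fixed component of $|K_S|$.

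The main obstacle is stating the canonical bundle formula correctly in the non-section case, and handling reducible multiple fibers, which are of type ${}_m\mathrm{I}_b$. Both are covered by the standard references, so I would cite them rather than reprove anything.
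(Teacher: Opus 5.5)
Your proof is correct. It reaches the conclusion by a different mechanism than the paper.

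\textbf{The paper's route.} The paper argues one divisor at a time. Since $K_S$ is numerically $f$-trivial, every $D \in |K_S|$ is vertical. Using Zariski's lemma (implicitly), it writes $D = \sum_i a_iF_i + E$ with $E$ disjoint from the $F_i$. Restricting the canonical bundle formula to $F_i$ gives $\cO_{F_i}(a_iF_i) \simeq \cO_{F_i}((m_i-1)F_i)$. Because $\cO_{F_i}(F_i)$ has order exactly $m_i$ (Barth--Hulek--Peters--Van de Ven, Lemma III.8.3), this forces $a_i \equiv m_i-1 \pmod{m_i}$, hence $a_i \ge m_i-1$.

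\textbf{Your route.} You never look at individual members of $|K_S|$. Instead you compute $f_*\cO_S\bigl(\sum(m_i-1)F_i\bigr) = \cO_C$ by a valuation argument. A rational function regular off $F_i$ descends to $C$ because $f_*\cO_S = \cO_C$. Its order along each component of $F_i$ is a multiple of $m_i$ and is at least $-(m_i-1)$, so it is regular. This step is sound, including for reducible fibers of type ${}_m\mathrm{I}_b$, since only divisibility of the multiplicities by $m_i$ is used. It gives the global description
\[
|K_S| = f^*|K_C+\mathbb{L}| + \textstyle\sum_i (m_i-1)F_i .
\]

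\textbf{Comparison.} Your argument avoids both Zariski's lemma and the exact torsion order of the normal bundle $\cO_{F_i}(F_i)$. It also gives more: $h^0(K_S) = h^0(C, K_C+\mathbb{L})$, and the entire divisor $\sum(m_i-1)F_i$ lies in the fixed part. The paper's argument is shorter once the two cited facts are granted, and it works directly with the restriction of a given divisor to the multiple fiber, which matches how the lemma is used in the proof of the elliptic proposition.
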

\begin{proof}
    Let $F_i$ be the underlying reduced divisors for the multiple fibers of $f$ with multiplicities $m_i \ge 2$. 
    Since $K_S$ is numerically $f$-trivial, every $D \in |K_S|$ is of the form $D = \sum_i a_i F_i + E$ where $a_i \ge 0$ and $E$ is a vertical effective divisor with no intersections with $F_i$'s. The canonical bundle formula (e.g., \cite[Theorem~V.12.1]{Barth-Hulek-Peters-VanDeVan}) yields
    \[ \cO_{F_i} (a_iF_i) = \cO_{F_i} (D) = \cO_{F_i} (K_S) = \cO_{F_i} ((m_i-1)F_i) .\]
    But $\cO_{F_i}(F_i)$ is a torsion line bundle of order $m_i$ \cite[Lemma~III.8.3]{Barth-Hulek-Peters-VanDeVan}, so this proves $a_i \ge m_i - 1 \ge 1$ as desired.
\end{proof}

An irreducible symplectic variety is a singular generalization of a hyper-K\"ahler manifold defined in \cite[Definition~1.3]{Greb-Guenancia-Kebekus19}. Corollary~13.3 in the same paper proves that every irreducible symplectic variety is simply-connected.
 Lagrangian fibrations in the singular setting were studied by Schwald \cite{Schwald}. 
By \cite[Theorem 3]{Schwald}, Lagrangian fibrations are generically smooth  and the general fiber is an abelian variety even when $X$ is singular.

\begin{proposition}
    Let $f \colon X \to B = \P^n$ be a  Lagrangian fibration from a factorial irreducible symplectic variety $X$. Then $f$ has no codimension $1$ multiple fibers.
\end{proposition}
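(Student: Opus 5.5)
The plan is to run the proof of Theorem~\ref{thm:HK} verbatim, checking that each input survives when $X$ is a factorial irreducible symplectic variety instead of a smooth hyper-K\"ahler manifold. We may assume $X$ projective: irreducible symplectic varieties in the sense of \cite{Greb-Guenancia-Kebekus19} are defined as projective, or else one runs a degenerate twistor argument as before. First, Proposition~\ref{prop:O(1/m)} applies once the hypotheses are checked. $X$ is normal and simply-connected by \cite[Corollary~13.3]{Greb-Guenancia-Kebekus19}. Because $X$ is factorial, the Weil divisor $E$ with $f^{-1}(D)=mE$ is Cartier. Simply-connectedness also gives torsion-freeness of $\Pic(X)$, since a torsion line bundle produces a connected \'etale cover. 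Hence $\cO_X(1/m)$ exists, and its restriction to fibers over $B\setminus D$ is trivial.

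Next, I would reprove Theorem~\ref{thm:mainbundles} in this setting. $X$ has canonical, hence rational, Gorenstein singularities with $\omega_X\simeq\cO_X$, since the symplectic form trivializes the canonical sheaf on the smooth locus and $X$ is normal. So Proposition~\ref{prop:local freeness of higher direct images} gives local freeness of $\Omega^i_j$. Verdier duality gives \eqref{eq:duality}, using that $X$ is Gorenstein with $\omega_X\simeq \cO_X$.

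The vanishing \eqref{eq:kollar} needs Koll\'ar's theorem for $\omega_X\otimes L$, where $L$ is a $\QQ$-pullback of an ample class. I would pass to a resolution $\pi\colon\wt X\to X$. Then $\rR\pi_*\omega_{\wt X}=\omega_X$, so $\rR f_*\cO_X(j/m)=\rR g_*(\omega_{\wt X}\otimes\pi^*\cO_X(j/m))$. Applying \cite[10.15]{Kollar-automorphic} on the smooth $\wt X$ then gives $h^p(B,\Omega^i_j)=0$ for $p>0$, $j>0$. Horrocks and the degree bounds follow as before. The ranks are $\binom{n}{i}$ because, by \cite[Theorem~3]{Schwald}, the general fiber is an abelian variety of dimension $n$ along which $X$ is smooth.

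The main obstacle is the Riemann--Roch input $\chi_X(\cO_X(\frac{1+km}{m}))=n+1$. For singular $X$ I would use the singular Huybrechts--Riemann--Roch/Fujiki relation: $\chi(L)$ is a polynomial in the BBF form $q(L)$ for irreducible symplectic varieties with rational singularities. Alternatively, without it, I would argue directly. $P(t):=\chi_X(\cO_X(t/m))$ is a polynomial in $t$. By Fujiki-type relations $c\,q(L)^{n}=L^{2n}$ and the analogous identities for mixed Chern/Todd terms, the class $\cO_X(1/m)$ is isotropic, since $\cO_X(1/m)^{\otimes m}$ is a pullback from $\P^n$ and $(f^*\cO(1))^{2n}=0$. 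So $P$ is constant, equal to $\chi(\cO_X)$. Rational singularities give $\chi(\cO_X)=\chi(\cO_{\wt X})$, and this equals $n+1$ because $h^{2i}(\cO)=1$ and odd cohomology of $\cO$ vanishes for irreducible symplectic varieties (reflexive forms are powers of $\sigma$, with Hodge symmetry via the resolution). If the constancy of $P$ is hard to get from general theory, I would instead use that $P(t)$ only depends on $q$, citing the singular version of Huybrechts' result.

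With these inputs, Lemma~\ref{lem:Opt} gives $\alpha=(n+1)[\cO_p]$ and $\|\alpha\|=(n+1)2^n$. The triangle inequality, as in Theorem~\ref{thm:HK}, gives $\|\alpha\|\le 2^n$, a contradiction.
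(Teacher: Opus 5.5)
Your proposal is correct and is essentially the paper's proof. The paper likewise observes that Section~\ref{sec:higher pushforward sheaves} applies to simply-connected K-trivial $X$ with rational singularities once $E$ is Cartier, which factoriality guarantees, and then reruns Theorem~\ref{thm:HK} using the singular Huybrechts--Riemann--Roch theorem of Bakker--Lehn together with $\chi(\cO_X)=n+1$ from Greb--Guenancia--Kebekus. Your extra checks (Koll\'ar vanishing via a resolution with $\rR\pi_*\omega_{\wt X}\simeq\omega_X$, ranks via Schwald's generic smoothness) make explicit what the paper leaves implicit.
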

\begin{proof}
    All results in \S \ref{sec:higher pushforward sheaves} apply when $X$ is simply-connected, has rational singularities, K-trivial, and the divisor $E$ is Cartier. The last holds when $X$ is factorial. The proof of Theorem \ref{thm:HK} goes through without any change because Huybrechts--Riemann--Roch theorem holds for line bundles on $X$ \cite[Corollary~5.16]{Bakker-Lehn22}. Note just like in the smooth case we have $\chi(\cO_X) = n+1$, see e.g. \cite[Proof of Corollary 13.3]{Greb-Guenancia-Kebekus19}, hence $\chi(L) = n+1$ for any $L \in \Pic(X)$ such that $q(L) = 0$.
\end{proof}

\begin{remark}
Similarly, Theorem \ref{thm:CY} and its proof work in the same way if $X$ is assumed to be projective with rational Gorenstein singularities, K-trivial and factorial. 
\end{remark}


\end{document}